\numberwithin{equation}{section}
\newtheorem{theorem}{Theorem}[section]
\newtheorem*{theorem*}{Theorem}
\newtheorem{defn}[theorem]{Definition}
\newtheorem{proposition}[theorem]{Proposition}
\newtheorem{lemma}[theorem]{Lemma}
\newtheorem{cor}[theorem]{Corollary}
\newtheorem{remark}[theorem]{Remark}
\newcommand{\epm}{e^{\pm i\Theta (x)}}
\newcommand{\R}{\mathbb{R}}
\newcommand{\N}{\mathbb{N}}
\newcommand{\Z}{\mathbb{Z}}
\newcommand{\C}{\mathbb{C}}
\newcommand\numberthis{\addtocounter{equation}{1}\tag{\theequation}}
\title[Jacobi Half Line Dispersive]{Dispersive Decay Estimates for periodic Jacobi operators on the half-line}
\author{Amir Sagiv}
\address{Department of Mathematical Sciences, New Jersey Institute of Technology, Newark, NJ 07102, USA}
\email{amir.sagiv@njit.edu}
\author{Remy Kassem}
\address{Department of Applied Physics and Applied Mathematics, Columbia University, 500 W120 Street, New York, NY 10027, USA}
\email{rhk2130@columbia.edu}
\author{Michael I.\ Weinstein}
\address{Department of Applied Physics and Applied Mathematics and Department of Mathematics, Columbia University, 500 W120 Street, New York, NY 10027, USA}
\email{miw2103@columbia.edu}
\begin{document}
\maketitle
\begin{abstract}
We establish dispersive time-decay estimates for periodic Jacobi operators on the discrete half-line, $\N$. Specifically, we prove $t^{-1/2}$ decay in the weighted $\ell^\infty_{-1}$ norm for all such operators. For the global $\ell^1 \to \ell^\infty$ decay estimate, we show that $t^{-1/3}$ decay holds under a nondegeneracy condition on the discriminant. Alternatively, for any even period $q\geq2$, if the continuous spectrum consists of exactly $q$ disjoint intervals (bands), we obtain a $t^{-1/(q+1)}$ decay rate without any further assumptions.
\end{abstract}

%%%%%%%%%%%%%%%%%%%%%%%%%%%%%%%%%%%%%%%%%%%%%%%%%%%%%%%%%%%%%%%%%%%%%%%%%%
\section{Introduction}
We study dispersive time-decay for the time-dependent Schrödinger equation on the discrete half-line, $\mathbb{N}=\{1,2,3,\ldots \}$, with a periodic Jacobi operator $J$. That is, we consider
\begin{equation*}
i\partial_t \psi(t) = J \psi(t) \, , \qquad \psi(0) = u \in \ell^2(\mathbb{N}) \, ,
\end{equation*}
where $J$ is a real, self-adjoint, nearest-neighbor difference operator 
\begin{subequations}\label{eq:JacobiDef}
\begin{equation}\label{eq:genJacobi}
    J = \begin{pmatrix}
        b_1 & a_1 & & &\\
        a_1 & b_2 & a_2 & &\\
        & a_2 & b_3 &a_3 &\\
        & & a_3 &\ddots  &\ddots \\
        & & & \ddots & 
    \end{pmatrix} \, .
\end{equation}
Assume $J$ has a minimal period, $q\geq 1$, such that for all $n\in \mathbb{N}$,
\begin{equation}\label{eq:perCoeff}
    a_{n+q}=a_n > 0 \, , \qquad b_{n+q}=b_n \in \R \, .
\end{equation}
\end{subequations}

Denoting the projection onto the continuous spectrum of $J$ by $P_{\rm c}$, our main results are operator-norm dispersive decay estimates for $e^{-itJ} P_{\rm c}$:
\begin{enumerate}
\item A local, weighted decay bound $\|e^{-itJ} P_{\rm c}\|_{\ell^1_1 \to \ell^\infty_{-1}} \lesssim t^{-1/2}$ for all periodic $J$ (Theorem \ref{thm:mainLocal}).
\item A global, unweighted decay bound $\|e^{-itJ} P_{\rm c}\|_{\ell^1 \to \ell^\infty} \lesssim t^{-1/3}$ under an algebraic condition (Theorem \ref{thm:mainGlobal}, part (1)).
\item A global, unweighted decay bound $\|e^{-itJ} P_{\rm c}\|_{\ell^1 \to \ell^\infty} \lesssim t^{-1/(q+1)}$ when $q$ is even and the spectrum consists of exactly $q$  disjoint intervals / bands (Theorem \ref{thm:mainGlobal}, part (2)).
\end{enumerate}

\subsection{Background - Dispersive decay estimates}

Dispersive decay estimates are a classical topic in analysis, focused on the quantitative study of how solutions to dispersive equations decay and spread over time. Generally, let $H$ be a self-adjoint operator on a Hilbert space $\mathcal{H}$, and denote the projection into its continuous spectrum by $P_{\rm c}$ \cite{cycon1987schrodinger, hall2013quantum}. Then, RAGE theorem guarantees qualitative dispersion of $e^{-itH}P_{\rm c}$ \cite{amrein1973characterization, cycon1987schrodinger, enss1978asymptotic, ruelle1969remark}. More informative quantitative time-decay bounds on $e^{-itH}P_{\rm c}$, however, are usually model-specific. Since the flow is unitary, i.e., $\|e^{-itH}\|_{\mathcal{H}}=1$, the spreading of the solution is equivalent to the decay of its maxima in discrete settings.
 There is an extensive literature concerning dispersive decay estimates in PDE models such as the Schr{\"odinger} \cite{jensen1979spectral, komech2010weighted, kopylova2014dispersion,schlag2007dispersive} and Dirac equations \cite{ERDOGAN2DMassive, erdougan2021massless, erdougan2018dispersive, erdougan2019dispersive,  kraisler2024dispersive}, to name just a few examples.   

The study of dispersion in {\em discrete} systems is more recent. For the one-dimensional discrete Schr{\"o}dinger operator $H$ on $\Z$ with a sufficiently rapidly decaying potential, one has $\|e^{-itH}P_{\rm c}\|_{\ell^1 \to \ell^{\infty}} \lesssim t^{-1/3}$, with faster decay rates for weighted $\ell^2$ norms   \cite{egorova2015dispersion, komech2006dispersive, pelinovsky2008spectral}. For a $q$-periodic potential on $\Z$, the best known estimate is $\|e^{-itH}P_{\rm c}\|_{\ell^1 \to \ell^{\infty}} \lesssim t^{-\min\{1/3, 1/(q+1)\}}$ \cite{mi2022dispersive}; see related estimates for certain quasi-periodic potentials in \cite{bambusi2020dispersive}. Decay estimates were also obtained for the discrete Laguerre operator, a non-bounded and non-periodic Jacobi operator \cite{koornwinder2018jacobi, kostenko2016dispersion}, and for Jacobi operators with eventually-constant coefficients \cite{egorova2016properties}. 

As noted, we are interested in the dynamics of $e^{-itJ}$ for $t>0$ for periodic Jacobi operators of the form~\eqref{eq:JacobiDef}. A relevant direction in the study of the dynamics of $e^{-itJ}$ concerns the question of ballistic transport, i.e., whether the expectation of the position operator grows linearly in time; see \cite{damanik2024ballistic} for a broad survey of this topic. Ballistic transport holds for all periodic and block-periodic Jacobi operators on $\Z$ \cite{damanik2015quantum}, a property that extends to higher dimensions \cite{fillman2021ballistic}.  Notwithstanding, the question of dispersive decay estimates remains open: while ballistic transport shows that the expectation of position grows linearly in time, it provides no information about dispersion.  {\em To the best of our knowledge, there has been no study of dispersive decay estimates for {\em any} Schr{\"o}dinger equation on the half line, $\N$.}

\subsection{Why Jacobi operators, why on the half line $\N$?}

Jacobi operators in one dimension (on $\N$ or $\Z$, periodic or not) are a key object in mathematics and physics. They are in one-to-one correspondence with families of orthogonal polynomials, as Jacobi operators define three-term recurrence relations for orthogonal polynomials, and vice versa \cite{lukic2022first, simon2010szego}. In numerical analysis, their finite truncations form Jacobi matrices that are at the heart of Gaussian quadrature algorithms, including the classical Golub–Welsch method \cite{golub1969calculation, olver2020fast, townsend2015race}. In integrable systems, Jacobi operators appear in the Lax pair of the Toda lattice models \cite{ablowitz1981solitons, teschl2000jacobi, toda2012theory}.

 The motivation most relevant to this work is the ubiquity of Jacobi operators as models of periodic and more generally of one-dimensional structured materials. Taking $a_n \equiv 1$ for all $n\in \N$, periodic Jacobi operators are simply discrete Schr{\"o}dinger operators with a periodic potential. Extending beyond that case, one of the simplest and most important examples is the Su-Schrieffer-Heeger (SSH) model \cite{su1979solitons} -- \eqref{eq:JacobiDef} where $b_n \equiv 0$ for all $n\in \N$ and $q=2$. The SSH model is a paradigmatic model in 1D topological wave phenomena \cite{asboth2016short, chung2023topological, freed2013twisted, shapiro2022continuum}. More broadly, Jacobi operators describe chains of sites with nearest-neighbor hopping, as has been realized in such diverse settings as condensed matter physics \cite{meier2016observation}, acoustics \cite{xue2022topological}, and photonics \cite{parto2018edge}, to name a few.

Why are we interested in the half line? Our motivation comes from the study of periodic materials, and in particular from edge modes and edge conductance: the transport of energy along edges and its localization near edges or domain walls. 

Motivated by modeling and experiments in photonic waveguides \cite{bellec2017non, jurgensen2021quantized} and radiated graphene \cite{merboldt2025observation, sagiv2022effective}, one is led to consider time-periodic perturbations of the form
\[
i\partial_t \psi = (J + \varepsilon J_1(t)) \psi,
\]
which model parametric forcing in so-called {\em Floquet materials}. In a PDE analog of this problem, it was shown that for sufficiently strong forcing, localized modes become metastable, eventually leaking energy into the continuous spectrum and decaying in time \cite{hameedi2023radiative}. This is an example of a broader phenomenon known as radiation damping, or ionization, which has been extensively studied in the context of the Schr{\"o}dinger and Dirac equations; see, e.g., \cite{borrelli2022complete, costin2008ionization, costin2018nonperturbative,costin2001resonance,   kirr2003metastable, miller2000metastability,  soffer-weinstein:99}. {\em Dispersive decay estimates are key to establishing such ionization phenomena.} As part of a broader program aimed at understanding discrete wave phenomena under parametric forcing, the results in this work provide an essential step. Looking ahead, we expect {\em linear} dispersive decay estimates to play a key role in understanding the asymptotic stability of {\em nonlinear} waves in discrete systems.

In terms of techniques, while the coefficients in \eqref{eq:JacobiDef} are periodic, $J$ is not translation invariant, since it is defined on $\N$ rather than $\Z$. In e.g.,  \cite{abdul2024slow, mi2022dispersive}, periodic Schr{\"o}dinger operators on $\Z$ are studied using the discrete Fourier transform, which reduces the problem to a parametric family of finite-dimensional Jacobi matrices and their spectrum. On $\N$, this avenue is not available to us.

Motivated by the analysis in \cite{ito2017resolvent} of the discrete Schrödinger operator on $\N$ via the sine transform, and by foundational theory of eigenfunction expansions \cite{berezanskii1968expansions}, we use the fact that $\ell^2(\N)$ is cyclic under $J$; that is,
$
\ell^2(\N) = \overline{\mathrm{Span}\{J^k \delta_1\}_{k \geq 0}}$, 
where $\delta_1 = (1, 0, 0, \ldots)$ is the standard unit vector. We can then find a sequence of orthogonal polynomials $(p_n(x))_{n=0}^{\infty}$ so that the propagator admits the representation
\[
\left[ e^{-itJ} u \right]_n = \int_{\sigma(J)} \langle p(x), u \rangle_{\ell^2(\N)} \, p_{n-1}(x) e^{-itx} \, d\mu(x),
\]
where \( p(x) = (p_0(x), p_1(x), \ldots) \) and $\mu$ is the spectral measure, see Sec.\ \ref{sec:genJac}. In our settings, the periodicity condition \eqref{eq:perCoeff} allows us to compute both $p_n$ and $\mu$ explicitly.

%In a forthcoming work \cite{kassem2025dispersive}, we consider the SSH operator with constant coefficients —- that is, \eqref{eq:JacobiDef} with $b_n \equiv 0$ for all $n \in \N$, $q = 2$, and potentially nonzero complex-conjugate off-diagonal terms. Using the one-sided discrete Fourier–Laplace transform, we compute the resolvent explicitly and construct the spectral measure directly via Stone's formula and the limiting absorption principle.

In a forthcoming work \cite{kassem2025dispersive}, we prove dispersive time-decay estimates for Hermitian Hamiltonians, where the wave function is defined on a dimer lattice. This studies an extension of Jacobi operators in the case $q=2$, where the off-diagonal entries may be complex, while preserving self-adjointness, $J^*=J$. The analysis makes use of a one-sided discrete Fourier–Laplace transform to compute the resolvent and then to construct  the spectral measure and the propagator via Stone’s formula.

As we were completing this manuscript, we became aware of a parallel and significant work being carried out by Damanik, Fillman, and Young \cite{DFYpreprint}, concerning global dispersive decay estimates for periodic Jacobi operators on $\Z$, in contrast to the setting considered here, on $\N$.

\subsection{Structure of the paper.} Section \ref{sec:jacobi} provides a summary of relevant results regarding periodic Jacobi operators. Section \ref{sec:mainRes} presents the precise statements of the main results, Theorems \ref{thm:mainLocal} and~\ref{thm:mainGlobal}. We then proceed to construct the propagator $e^{-itJ}P_c$ and derive a simplified oscillatory integral in Section \ref{sec:prop}, and provide its estimates in Section \ref{sec:estimates}, thus proving the main results. 

\subsection{Acknowledgments} This research was supported in part by National Science Foundation grants 
DMS-1908657 and DMS-1937254 (MIW),  Simons Foundation Math + X Investigator Award \#376319 (MIW), and the Binational Science Foundation Research Grant \#2022254 (MIW, AS). The author would like to thank J.\ Breuer for introducing him to the world of Jacobi operators, and would like to thank N.\ Levi, M.\ Luki{\'c}, T.\ Malinovitch, and S.\ Steinerberger for useful pointers and suggestions along the way.

%%%%%%%%%%%%%%%%%%%%%%%%%%%%%%%%%%%%%%%%%%%%%%%%%%%%%%%%%%%%%%%%%%%%%%%%%%

\section{Review of Jacobi operators on the half-line}\label{sec:jacobi}
We begin by recalling essential properties of bounded Jacobi operators on $\ell^2 (\N)$ in Sec.\ \ref{sec:genJac}. This provides foundation for the specialized case of periodic coefficients, see Sec.\ \ref{sec:perExpo}. This introduction touches only on the most essential properties that will be used throughout this work, and is based on the extensive manuscripts of \cite{berezanskii1968expansions, lukic2022first, simon2010szego, teschl2000jacobi}. 

\subsection{Bounded Jacobi operators on the half line.}\label{sec:genJac} 
Consider \eqref{eq:genJacobi} where $a_n>0$ and all the coefficients are bounded (but not necessarily periodic). Then $J$ is a bounded self-adjoint operator on $\ell^{2}(\N)$, whose action on any $v:\N \to \R$ is defined by  
$$ \left[ Jv\right]_n = \left\{ \begin{array}{cc}
     b_1 v_1 + a_1 v_2 \, ,&  n=1  \, ,\\
     a_{n-1}v_{n-1} + b_n v_n + a_n v_{n+1} \, , & n\geq 2 \, .
\end{array} \right. $$
Thus, the (formal) eigenvalue equation 
$
Jv(x) = xv(x)$ for any $x\in \C$ can be rewritten as a recursion
$$\begin{pmatrix}
    v_{n+1}(x) \\ a_n v_n (x)
\end{pmatrix} = A(a_n,b_n,x) \begin{pmatrix}
    v_n (x)\\ a_{n-1}v_{n-1}(x)
\end{pmatrix} \, , $$ 
where
\begin{equation}\label{eq:A_def}
    A(a,b,x) \equiv \frac{1}{a}\begin{pmatrix}
        x-b & -1 \\ a^2 &0 
    \end{pmatrix} \, .
\end{equation}
When iterated, this relation yields the transfer matrix representation
\begin{equation}\label{eq:Tdef}
    \begin{pmatrix}
    v_{n+1} (x) \\ a_n v_n (x)
\end{pmatrix} = T_n (x)\begin{pmatrix}
    v_1 (x) \\ 0
\end{pmatrix} \, , \qquad T_n (x) \equiv  A(a_n,b_n,x)A(a_{n-1}b_{n-1},x) \cdots A(a_1,b_1,x) \, . 
\end{equation}
Loosely speaking, Jacobi operators can be ``diagonalized'' such that every point in the spectrum corresponds to a one-dimensional space of generalized eigenfunctions \cite{berezanskii1968expansions} -- a considerable simplification of the general spectral theorem. There exists a unique solution to the eigenvalue equation $$Jv(x)=xv(x) \, , \qquad v_1(x)=1 \, ,$$ satisfying (by \eqref{eq:Tdef}) 
            \begin{equation}\label{eq:pn_formula}
        v_{n+1} (x) = [T_{n} (x)]_{1,1} \, .
    \end{equation}
    For every $n\geq 0$, $p_n (x)\equiv v_{n+1}(x)$ is a real-valued polynomial of degree $n$ in the variable $x$. Furthermore, $(p_n)_{n=0}^{\infty}$ is the sequence of polynomials $p_n:\sigma (J)\to \R$ which are orthogonal with respect to the spectral measure 
    \begin{equation}\label{eq:dmudef}
        d\mu(x) \equiv \langle \delta_1, dE(x)\delta_1 \rangle_{\ell ^2} \, . 
        \end{equation}
    Here $dE$ is the projection-valued spectral measure associated with $J$, and $\delta_1= (1,0,\ldots,0,\ldots)$ is the standard unit vector. Finally, we have the following {\em generalized eigenfunction representation}~\cite{berezanskii1968expansions}: for every measurable $g:\sigma(J)\to \mathbb{C}$ and every $u\in \ell^2 (\mathbb{N})$,
    \begin{equation}\label{eq:eigenfunction}
        \left[ g(J)u\right]_n = \int\limits_{\sigma(J)} \langle p(x),u\rangle_{\ell ^2(\N)} ~ p_{n-1} (x) g(x) \, d\mu (x) \, , \qquad {\rm where}\quad \langle p (x), u \rangle_{\ell^2(\N)} \equiv  \sum\limits_{m\geq 1} p_{m-1} (x) u_m \, . 
    \end{equation}

\subsection{Periodic Jacobi Operators}\label{sec:perExpo}

Next, focus on {\em periodic} Jacobi operators on the half-line.  $J$ is said to be of period $q\in \N$ if $q$ is the smallest integer for which $a_{n+q}=a_n $ and $b_{n+q}=b_n$ for all $n\in \N$. We now show that the periodicity allows us to compute the eigenfunction expansion~\eqref{eq:eigenfunction} in closed form.

By applying the periodicity condition to \eqref{eq:A_def} and \eqref{eq:Tdef}, then
    for every $n=sq+r$ with $0\leq r <q$,
    \begin{equation}\label{eq:Tper}
        T_{n}(x) = T_r (x) \left( T_q (x) \right)^s \, ,
    \end{equation}
    where $T_0 (q) = {\rm Id}_{2\times 2}$, and $T_q(x)$ is called the {\em monodromy matrix}. Diagonalizing the monodromy matrix by $T_q (x)=P^{-1}(x)D(x)P(x)$,  then $(T_q (x)) ^s = P^{-1}(x) D^s (x) P(x)$ can be computed once for all $s\geq 1$, and therefore by \eqref{eq:pn_formula} we can compute $p_n (x)$ for all $n\geq 1$, see Sec.\ \ref{sec:prop}.
    
    The monodromy matrix, $T_q (x)$, fully characterizes the spectral measure. This will allow us also to write the eigenfunction expansion \eqref{eq:eigenfunction} explicitly. Denote  
        \begin{equation}\label{eq:DeltaDef}
              \Delta (x) \equiv {\rm Trace}\left(T_q (x)\right) \, , \qquad t_{i,j}(x) \equiv \left[ T_q (x) \right]_{i,j} \, , \qquad i,j\in \{1,2\} \, .
        \end{equation}

        \begin{theorem}[See in particular \cite{lukic2022first}, Theorem 10.63 and \cite{simon2010szego}, Theorem 5.4.2]\label{thm:DeltaProps}
        \mbox{}
        \begin{enumerate}
            \item $\Delta (x)$ is a polynomial of degree $q$.
            \item $J$ has only continuous spectrum and pure point spectrum. Its continuous spectrum satisfies $ \sigma_{c}(J) =  \Delta^{-1}([-2,2])$ and can be divided into   $q$ bands (intervals) \begin{equation}\label{eq:bandStructure}
\sigma_{\rm c}(J) = \bigcup_{j=1}^q I_j \, , \qquad I_j \equiv [\lambda_{2j-1}, \lambda_{2j}] \, ,
\end{equation}
where $\lambda_1 < \lambda_2 \leq \lambda_3 \cdots \leq \lambda_{2j-1}<\lambda_{2j} \leq \lambda _{2j+1} \cdots <\lambda _{2q}$, for each of which $\Delta ^2 (\lambda _i)=4$.
\item Each $I_j$ is a set on which $\Delta(x)$ traverses the entire interval $[-2,2]$ monotonically (either decreasing or increasing). 
\item Define $\Theta (x):\sigma_{\rm c}\to [-\pi, 0]$  to satisfy
\begin{equation}\label{eq:Thetadef}
\Delta  (x) = 2\cos \left( \Theta (x)\right) \, .
\end{equation}
Then $\Theta(x)$ traverses $[-\pi, 0]$ monotonically on each $I_j$.
\item $\Delta ' (x)$ has exactly $q-1$ simple zeroes, denoted by $\kappa _1 < \cdots <\kappa _{q-1}$, and either $\lambda_{2j}<\kappa_j<\lambda_{2j+1}$ (the {\em gapped case}) or $\lambda_{2j}=\kappa_j=\lambda_{2j+1}$ (the {\em gapless case}).
    \item The continuous part of the spectral measure $d\mu_c (x) = w(x) \,dx$ has the density
    \begin{equation}\label{eq:wx}
        w(x) = \frac{\sqrt{4-\Delta ^2 (x)}}{|t_{2,1}(x)|} \, .
    \end{equation}
    \item  $J$ has at most $q-1$ simple eigenvalues at points $x\in \R$ where $t_{2,1}(x)=0$ and $\left|t_{1,1}(x)\right|<~1$. All eigenvalues of $J$ lie in gaps of the continuous spectrum.
        \end{enumerate}

\end{theorem}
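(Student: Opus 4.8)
The plan is to treat Theorem~\ref{thm:DeltaProps} as a compendium of classical facts from the Floquet theory of periodic Jacobi operators, with full proofs of the individual statements available in \cite{teschl2000jacobi}, \cite[Thm.~5.4.2]{simon2010szego}, and \cite[Thm.~10.63]{lukic2022first}; the unifying object is the monodromy matrix $T_q(x)$ of \eqref{eq:Tdef}, and I would organize everything around it. Part~(1) is a direct computation: in $A(a,b,x)$ of \eqref{eq:A_def} only the $(1,1)$ entry depends on $x$, affinely with slope $1/a$, and $\det A\equiv 1$; hence in $T_q(x)=A(a_q,b_q,x)\cdots A(a_1,b_1,x)$ the $(1,1)$ entry has leading term $x^q/(a_1\cdots a_q)$ while the $(2,2)$ entry has degree $\le q-2$, so $\Delta=t_{1,1}+t_{2,2}$ has degree exactly $q$. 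En route one records $t_{1,1}(x)=p_q(x)$ and $t_{2,1}(x)=a_q\,p_{q-1}(x)$, which streamlines parts~(6)--(7).

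For part~(2) I would first run Floquet--Bloch theory on the two-sided operator $\tilde J$ on $\ell^2(\Z)$ with the same coefficients: for real $x$, $T_q(x)\in SL(2,\R)$ has eigenvalues $\rho,\rho^{-1}$, and $Jv=xv$ has a bounded solution iff $|\rho|=1$, i.e.\ iff $|\rho+\rho^{-1}|=|\Delta(x)|\le 2$, so $\sigma(\tilde J)=\Delta^{-1}([-2,2])$. Decoupling $\tilde J$ at one bond realizes it as a finite-rank perturbation of $J$ (together with a reflected half-line operator), whence Weyl's theorem gives $\sigma_{\mathrm{ess}}(J)=\Delta^{-1}([-2,2])$. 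On the interior of each band the transfer matrices are elliptic with uniformly bounded products, so every generalized eigenfunction is bounded and Gilbert--Pearson subordinacy theory forces purely absolutely continuous spectrum there; in the open gaps $|\Delta|>2$ produces an exponential dichotomy, so only isolated eigenvalues occur and there is no singular continuous part. Labelling the $2q$ real roots of $\Delta^2-4$ (with multiplicity) as the band edges $\lambda_1\le\cdots\le\lambda_{2q}$ then yields \eqref{eq:bandStructure}.

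Parts~(3)--(5) rest on the single genuinely analytic ingredient, the \emph{non-degeneracy of the discriminant}: if $|\Delta(x_0)|<2$ then $\Delta'(x_0)\ne 0$. I would prove this via eigenvalue perturbation. At such $x_0$ the matrix $T_q(x_0)$ has simple non-real eigenvalues $e^{\pm i\theta_0}$, so $\rho(x)$ is analytic near $x_0$; the first-order perturbation formula applied to $\partial_x T_q(x)=\sum_{k=1}^{q}\tfrac1{a_k}\,(A_q\cdots A_{k+1}e_1)(e_1^{\top}A_{k-1}\cdots A_1)$ expresses $\dot\rho(x_0)$, and hence $\dot\Delta(x_0)=\dot\rho\,(1-\rho^{-2})$, as $(\sin\theta_0)$ times a strictly signed sum over one period of squared values of a Floquet solution, which is nonzero. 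Granting this, $\Delta$ has no critical point where $|\Delta|<2$, so on each band $I_j$ it is strictly monotone, and being continuous with $\Delta^2=4$ at the endpoints it sweeps all of $[-2,2]$: this is~(3). Since $y\mapsto-\arccos(y/2)$ is a strictly monotone bijection of $[-2,2]$ onto $[-\pi,0]$, the function $\Theta$ of \eqref{eq:Thetadef} inherits strict monotonicity and full range $[-\pi,0]$ on each $I_j$, giving~(4). For~(5): $\Delta'$ has degree $q-1$, hence at most $q-1$ zeros, all lying in $\{|\Delta|\ge 2\}$ by non-degeneracy; in each of the $q-1$ gaps $\Delta$ leaves and returns to level $\pm 2$, so has an interior critical point when the gap is open, while a closed gap $\lambda_{2j}=\lambda_{2j+1}$ is a double root of $\Delta^2-4$, hence of $\tfrac12(\Delta^2-4)'=\Delta\,\Delta'$ with $\Delta=\pm 2\ne 0$, hence of $\Delta'$. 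Each gap thus absorbs exactly one zero of $\Delta'$; since $q-1$ zeros are produced and at most $q-1$ exist, all are simple with no others — this is the gapped/gapless dichotomy.

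For part~(6) I would compute the Weyl $m$-function $m(z)=\langle\delta_1,(J-z)^{-1}\delta_1\rangle$ from the $\ell^2(\N)$ solution of $Jv=zv$, which for periodic $J$ is the eigendirection of $T_q(z)$ with eigenvalue in the open unit disk; taking $z\to x+i0$ on a band and using $\rho+\rho^{-1}=\Delta(x)$, so $\im\rho=\pm\tfrac12\sqrt{4-\Delta^2(x)}$, together with the normalization by $t_{2,1}(x)$, makes $w(x)=\tfrac1\pi\im m(x+i0)$ equal to \eqref{eq:wx}. For part~(7), $x\in\R$ is an eigenvalue iff the orthogonal-polynomial solution $(p_n(x))_{n\ge 0}$ of \eqref{eq:pn_formula} lies in $\ell^2(\N)$; by \eqref{eq:pn_formula} and $T_{sq+r}(x)=T_r(x)T_q(x)^s$ this forces $\binom{1}{0}$ to be the contracting eigenvector of $T_q(x)$, i.e.\ $t_{2,1}(x)=0$ (equivalently $p_{q-1}(x)=0$) with eigenvalue $t_{1,1}(x)=p_q(x)$ satisfying $|p_q(x)|<1$, so that the iterates $T_q(x)^s\binom{1}{0}$ decay; then $t_{2,2}=1/t_{1,1}$ forces $|\Delta(x)|>2$, so $x$ lies in a gap, the solution space is one-dimensional so the eigenvalue is simple, and $\deg p_{q-1}=q-1$ caps the number of such points at $q-1$. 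I expect the main obstacle to be the discriminant non-degeneracy underlying~(3)--(5); the remaining parts are bookkeeping with the explicit monodromy matrix, or direct appeals to \cite{teschl2000jacobi,simon2010szego,lukic2022first}.
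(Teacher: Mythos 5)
The paper does not actually prove Theorem~\ref{thm:DeltaProps}: it is presented as a compendium of classical Floquet-theoretic facts with the proofs deferred to \cite{lukic2022first} (Theorem 10.63) and \cite{simon2010szego} (Theorem 5.4.2), so there is no in-paper argument to compare against. Your outline correctly reconstructs the standard proofs from exactly those sources --- the monodromy-matrix degree count for (1), Weyl/subordinacy theory for (2), non-vanishing of $\Delta'$ where $|\Delta|<2$ for (3)--(5), the Weyl $m$-function for (6), and the contracting-eigenvector criterion $t_{2,1}(x)=0$, $|t_{1,1}(x)|<1$ for (7) --- and you rightly isolate the discriminant non-degeneracy as the only step that is more than bookkeeping.
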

        \begin{remark}
It is common to define $\Delta (x) = 2\cos (q\Theta (x))$, in which case $\Theta$ is the well-known as the Marchenko-Ostrovski map; for brevity, we omit the details of that construction here, see \cite{korotyaev2007marchenko} and \cite[Sec.\ 10.10]{lukic2022first} for details. 
\end{remark}

\section{Main Results}\label{sec:mainRes}
Our main result is a weighted (local) $\ell^1_1 \to \ell^{\infty}_{-1}$ dispersive decay estimate. Define the weighted $\ell^p$ norms:  for any $u:\N \to \C$, any $\sigma\in \R$, and $p\geq 1$,
$$ \|u\|_{\ell^p_{\sigma}} \equiv \left[\sum\limits_{n=1}^{\infty} \left| n^{\sigma} u_n\right|^p \right]^{1/p} \, , \qquad \qquad \|u\|_{\ell^{\infty}_{\sigma}} \equiv \sup\limits_{n\geq 1} \left| n^{\sigma}u_n\right| \, .$$
These norms allow us to quantify the spatial localization of functions on $\N$. 

\begin{theorem}\label{thm:mainLocal}
    Let $J$ be a periodic Jacobi operator as defined in \eqref{eq:JacobiDef}. Let $P_{\rm c}$ be the $\ell^2(\N)$-projection onto the continuous spectrum of $J$. Then
        $$\|e^{-itJ}P_{\rm c}u\|_{\ell^{\infty}_{-1}} \leq C t^{-1/2}\|u\|_{\ell^1_1} \, .$$
\end{theorem}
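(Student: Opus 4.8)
The plan is to start from the generalized eigenfunction representation \eqref{eq:eigenfunction} applied to $g(x)=e^{-itx}$ restricted to $\sigma_{\rm c}(J)$, so that
\[
\left[e^{-itJ}P_{\rm c}u\right]_n = \int_{\sigma_{\rm c}(J)} \langle p(x),u\rangle_{\ell^2(\N)}\, p_{n-1}(x)\, e^{-itx}\, w(x)\, dx.
\]
By the $\ell^1_1\to\ell^\infty_{-1}$ duality it suffices to bound $n^{-1}|[e^{-itJ}P_{\rm c}u]_n|$ uniformly in $n$, and after expanding $\langle p(x),u\rangle = \sum_m p_{m-1}(x)u_m$ it suffices to establish the pointwise kernel bound
\[
\sup_{m,n\ge 1}\ \frac{1}{m\,n}\left| \int_{\sigma_{\rm c}(J)} p_{m-1}(x)\,p_{n-1}(x)\,e^{-itx}\,w(x)\,dx \right| \lesssim t^{-1/2},
\]
since then $n^{-1}|[e^{-itJ}P_{\rm c}u]_n|\le \sum_m m\,|u_m|\cdot (mn)^{-1}|K_{m,n}(t)| \lesssim t^{-1/2}\|u\|_{\ell^1_1}$. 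So the whole theorem reduces to a uniform-in-$(m,n)$ oscillatory integral estimate with a $1/(mn)$ gain.

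The next step is to make the oscillatory integral explicit using the monodromy/Floquet structure from Section \ref{sec:jacobi}. Writing $n-1 = sq+r$, diagonalizing $T_q(x)=P^{-1}(x)D(x)P(x)$ on the interior of each band (where the eigenvalues of $T_q(x)$ are $e^{\pm i\Theta(x)}$ with $\Theta$ real), the formula \eqref{eq:pn_formula}--\eqref{eq:Tper} expresses $p_{n-1}(x)$ as a combination of $e^{\pm i s\Theta(x)}$ with coefficients that are bounded rational/trigonometric functions of $x$ on each closed band, together with the band edges contributing algebraic factors. Changing variables on each band $I_j$ from $x$ to $\theta=\Theta(x)\in[-\pi,0]$ — which is a valid monotone change of variable by Theorem \ref{thm:DeltaProps}(3)--(4), with $dx = \Theta'(x)^{-1}\,d\theta$ and the Jacobian combining with $w(x)$ — one reduces each band's contribution to a finite sum of integrals of the schematic form
\[
\int_{-\pi}^{0} c_{j,\pm\pm}(\theta)\, e^{\pm i(m-1)\theta}\, e^{\pm i s q \theta}\, e^{-it\,x_j(\theta)}\, d\theta,
\]
where $x_j(\theta)$ is the inverse of $\Theta$ on $I_j$ and the amplitudes $c_{j,\pm\pm}$ are smooth on the open interval but may be singular (like $(4-\Delta^2)^{-1/2}$, i.e.\ $|\sin\theta|^{-1}$, or worse near internal band edges where $\Delta'$ vanishes in the gapped case) at $\theta\in\{-\pi,0\}$. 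The point of the weighted estimate is that we only need $t^{-1/2}$, which is exactly the van der Corort decay from a single nondegenerate critical point of $x_j(\theta)$, and the $1/(mn)$ normalization is what absorbs the linear-in-$(m,n)$ phases: integrating by parts once in $\theta$ against $e^{\pm i(m-1)\theta\pm isq\theta}$ trades a factor of $(|m\pm sq|)^{-1}\lesssim n^{-1}$ (uniformly, using $s\le n/q$) — this is the mechanism that produces the weight $-1$ in $\ell^\infty_{-1}$ and the weight $+1$ in $\ell^1_1$.

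Concretely I would split into the regime $t\lesssim mn$ and $t\gtrsim mn$ (or rather handle the two phases $(m-1)\theta$ and the $t$-dependent phase on an equal footing): in the first regime, treat $e^{\pm i(m-1)\theta}e^{\pm isq\theta}$ as the oscillatory factor and integrate by parts, gaining $(m n)^{-1}$ times the total variation of the amplitude $c_{j,\pm\pm}$ — which requires controlling $\int_{-\pi}^0 |c_{j,\pm\pm}'(\theta)|\,d\theta$, and here the endpoint singularities of $c_{j,\pm\pm}$ (integrable of type $|\theta|^{-1/2}$ at band edges, up to $|\theta|^{-2/3}$-type behavior at degenerate internal edges) are still integrable after one derivative when balanced against the vanishing of the transfer-matrix coefficients $t_{2,1}$ etc.; in the second regime $t\gtrsim mn$, absorb the polynomial factors into a uniformly bounded amplitude and apply the van der Corpus lemma to $e^{-itx_j(\theta)}$, whose phase $x_j(\theta)$ has $x_j'=1/\Theta'(x)\ne 0$ except at the finitely many points $\kappa_\ell$ where $\Delta'=0$, and at those points $x_j''\ne 0$ (or one uses the standard van der Corpus bound with the integrable singularity of the amplitude), yielding $t^{-1/2}$. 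The main obstacle I anticipate is the uniform control of the amplitudes and their first derivatives near the band edges $\lambda_i$ and the internal critical points $\kappa_\ell$ — i.e.\ verifying that after the $x\mapsto\theta$ change of variables the density $w(x)$ and the Floquet coefficients combine into functions whose singularities are mild enough for one integration by parts and one van der Corpus application, simultaneously and uniformly over all $(m,n)$ and over the finitely many bands; handling the gapless case ($\lambda_{2j}=\kappa_j=\lambda_{2j+1}$), where a band edge and a critical point of $\Delta$ collide, requires a slightly more careful local model but still only costs $t^{-1/2}$, consistent with the statement.
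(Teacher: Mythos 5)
Your overall architecture --- eigenfunction expansion, reduction to a kernel bound $\sup_{m,n}(mn)^{-1}|K_{m,n}(t)|\lesssim t^{-1/2}$, monodromy diagonalization, change of variables to the Floquet phase $\theta=\Theta(x)$, and van der Corput --- is the same as the paper's. However, two of your key mechanisms are wrong as stated. First, the location of the stationary points is inverted. Since $\Delta(k_j(\theta))=2\cos\theta$ gives $k_j'(\theta)=-2\sin\theta/\Delta'(k_j(\theta))$, the phase $k_j$ is stationary precisely at $\theta\in\{-\pi,0\}$ when the corresponding band edge is \emph{gapped} (i.e.\ $\Delta'\neq 0$ there), and at such a point $k_j''=\pm 4/\Delta'\neq 0$; at an \emph{ungapped} edge --- which is exactly where a zero $\kappa_\ell$ of $\Delta'$ meets the band --- the $0/0$ resolves to $\Delta''(k_j)\,[k_j']^2=-2$, so $k_j'\neq 0$ and there is \emph{no} stationary point. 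You claim the opposite ($x_j'\neq 0$ except at the $\kappa_\ell$), so your nonstationary-phase step would be applied on neighborhoods of gapped edges where $x_j'\to 0$ and no uniform lower bound on the first derivative exists; it fails exactly where the $t^{-1/2}$ contribution actually originates. The unconditional validity of the theorem rests on the fact that boundedness of $\sigma(J)$ forces at least one gapped edge, hence at least one nondegenerate stationary point.

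Second, your mechanism for producing the $(mn)^{-1}$ gain --- integrating by parts against $e^{\pm i(m-1)\theta\pm i s q\theta}$ to trade for $|m\pm sq|^{-1}\lesssim n^{-1}$ --- does not work: the difference frequency $m-1-sq$ can vanish (near-diagonal $m\approx n$), and even the sum frequency only yields $(m+n)^{-1}\simeq\min(m,n)^{-1}$, not $(mn)^{-1}$. The correct and much simpler route, which is what the paper does, is to keep $\sin(m\varphi)\sin(n\varphi)$ inside the van der Corput amplitude $f$; then $\|f'\|_{L^1}\lesssim m+n\leq 2mn$, and the weight $(mn)^{-1}$ is there precisely to absorb this derivative growth --- no regime splitting in $t$ is needed. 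Relatedly, the singular amplitudes you flag as the ``main obstacle'' are an artifact of keeping the exponentials $e^{\pm i s\Theta}$ separate (each inheriting a $1/\sin\Theta$ from $P^{-1}$): recombining them into $\varrho_\ell=\sin(\ell\Theta)/\sin\Theta$, the product $\varrho_n\varrho_m\,d\mu_{\rm c}$ carries a single factor of $\sin^{-1}\Theta$ which cancels against the Jacobian $dx=k_j'(\varphi)\,d\varphi\propto\sin\varphi\,d\varphi$, leaving the smooth amplitude $Q(k_j(\varphi))\sin(m\varphi)\sin(n\varphi)$ of \eqref{eq:Ft_fin}. With that bookkeeping there are no endpoint singularities to control, and the proof closes by one van der Corput application ($s=2$) near gapped edges plus nonstationary phase elsewhere.
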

To understand this result, note first that $u\in \ell^1_1(\N)$ is a stronger localization condition than absolute summability, $u\in \ell^1(\N)$. Second, decay in the $\ell^{\infty}_{-1}$ norm describes not the global decay of the solution's maximum, but rather how the maximum escapes from the origin over time.

Crucial to the analysis of Theorem \ref{thm:mainLocal} (Sec.\ \ref{sec:localPF}) is that even if some spectral gaps close, there is always a band edge: simply because the spectrum is bounded. Moreover, generalizing the work of Borg for Schr{\"o}dinger operators, Flaschka proved that if $\sigma_{\rm c}(J)$ consists of a single interval, then $J$ is a discrete Schr{\"o}dinger operator with a constant potential \cite{flaschka2005discrete}; see also \cite[Theorem 5.4.21]{simon2010szego}. Therefore, we can always assume that there exists a gap in the spectrum of a periodic Jacobi operator $J$ with a minimal period $q\geq 2$.

To introduce our $\ell^{\infty}$ (global) dispersive decay estimates, we introduce the parametrization of the continuous spectrum, $\sigma_{\rm c} (J)$, via the discriminant $\Delta (x)$ and its associated phase function $k$:
defining $\Theta (x)$  as in \eqref{eq:Thetadef}, the restriction of $\Theta$ to any individual band, $\Theta _j \equiv \Theta|_{x\in I_j}:I_j \to [-\pi,0]$, is strongly monotonic. Define $k_j:[-\pi,0]\to I_j$ to be its inverse, i.e., $k_j(\Theta (x))=x$ for every $x\in I_j$. 

\begin{theorem}\label{thm:mainGlobal}
    Let $J$ be a periodic Jacobi operator with minimal period $q\geq 1$, see \eqref{eq:JacobiDef}.
    \begin{enumerate}
    \item Suppose that for each $1\leq j \leq q$ and every $\varphi \in [-\pi,0]$, if $k''_j(\varphi)=0$ then $k'''_j(\varphi)\neq 0$.
    Then 
    \begin{equation}\label{eq:global13rate}
    \|e^{-itJ}P_{\rm c}u\|_{\ell^{\infty}} \leq Ct^{-1/3}\|u\|_{\ell^1} \, .
    \end{equation}
    \item     Let $q\geq 2$ be even, and suppose that $\sigma_{\rm c}(J)$, the continuous spectrum of $J$, consists of {\em exactly} $q$ disjoint intervals $I_1,\ldots , I_q$ (with no conditions on the phase functions $k_j$). Then
        \begin{equation}\label{eq:globalevenq1}
        \|e^{-itJ}P_{\rm c}u\|_{\ell^{\infty}} \leq Ct^{-1/(q+1)}\|u\|_{\ell^1} \, .
        \end{equation}

    \end{enumerate}
\end{theorem}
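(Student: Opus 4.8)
The plan is to derive, via the generalized eigenfunction expansion \eqref{eq:eigenfunction} together with the explicit periodic formulas of Section \ref{sec:perExpo}, a representation of $[e^{-itJ}P_{\rm c}u]_n$ as a finite sum of oscillatory integrals over the bands $I_j$, change variables from $x\in I_j$ to the phase variable $\varphi = \Theta(x)\in[-\pi,0]$, and then estimate each resulting integral by the van der Corput lemma. After the change of variables $x = k_j(\varphi)$, each band contribution takes the schematic form
\begin{equation*}
\int_{-\pi}^{0} A_{j,n}(\varphi)\, e^{-it\, k_j(\varphi)}\, d\varphi,
\end{equation*}
where $A_{j,n}$ collects the polynomial data $p_{n-1}$, the bracket $\langle p(x),u\rangle$, and the density $w(x)\,dx = w(k_j(\varphi))\,k_j'(\varphi)\,d\varphi$; the crucial point is that this amplitude, together with its first derivative, is bounded uniformly in $n$ and in $\varphi$ (this is where the periodicity is used: the monodromy-matrix diagonalization gives $p_{n-1}(x)$ as a bounded oscillatory expression times powers of unimodular eigenvalues of $T_q$, and the weight $w$ extends to a bounded function after the square-root singularities at band edges are absorbed by $k_j'$). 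The phase is $t$ times $k_j(\varphi)$, so the stationary-phase behavior is governed entirely by the vanishing of derivatives of $k_j$.

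For part (1), the hypothesis is exactly that on each band the phase $k_j$ has no degenerate critical points of order higher than cubic: wherever $k_j'' = 0$ we have $k_j'''\neq 0$. The van der Corput lemma then gives $t^{-1/3}$ decay from any neighborhood of such a point, $t^{-1/2}$ decay where $k_j''\neq 0$, and (after accounting for the band endpoints, where the amplitude may have a one-sided square-root behavior but the phase derivative $k_j'$ is nonzero — or, if a gap is closed, one checks the order of vanishing directly) the worst rate over the finitely many bands is $t^{-1/3}$. A standard partition-of-unity argument localizes each integral to a fixed neighborhood of each critical point and endpoint, and the uniform amplitude bounds make the resulting constant $C$ independent of $n$, giving \eqref{eq:global13rate}.

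For part (2), when $q$ is even and the spectrum has exactly $q$ disjoint bands — i.e., all gaps are open — we cannot assume anything about the phase functions, so a critical point of $k_j$ could in principle be highly degenerate. The key structural input is that $\Delta(x)$ is a polynomial of degree $q$ (Theorem \ref{thm:DeltaProps}(1)), hence $\Delta'$ has exactly $q-1$ zeros $\kappa_1<\cdots<\kappa_{q-1}$, one strictly inside each gap in the gapped case (Theorem \ref{thm:DeltaProps}(5)); consequently $\Delta'\neq 0$ on the \emph{interior} of each band, so $\Theta$ and its inverse $k_j$ are genuine diffeomorphisms on the open band, and the only places where $k_j'$ can vanish are the band endpoints $\lambda_i$, where $\Delta'(\lambda_i)\neq 0$ (again by the gapped assumption) but $\Delta^2(\lambda_i)=4$ forces a square-root-type relation between $x-\lambda_i$ and $\varphi$. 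Expanding $\Delta(x) = 2\cos\varphi$ near an endpoint where $\Delta(\lambda_i)=\pm2$ and $\Delta'(\lambda_i)\neq0$, one finds $x - \lambda_i \sim c\,\varphi^2$ (or $\sim c(\varphi\mp\pi)^2$), so $k_j$ vanishes to exactly second order in $\varphi$ at each endpoint. But — and here the parity and band-count hypothesis enters — since there are exactly $q$ bands and $q$ is even, the endpoints pair up across the $q-1$ open gaps so that the total count of endpoint singularities is $2q$, and a generic perturbation argument will not help; instead one bounds the order of degeneracy of the composed phase $k_j$ directly. The sharpest contribution comes from an endpoint of a band: there $k_j(\varphi) - \lambda_i$ behaves like $\varphi^2$ generically, but at a band edge that is also an endpoint of the \emph{outermost} bands the phase can degenerate further, and the worst case is controlled by the degree $q$ of $\Delta$, giving a critical point of order at most $q$ for the composite phase and hence, by van der Corput, decay no slower than $t^{-1/(q+1)}$.

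The main obstacle I anticipate is part (2): proving that the composite phase $k_j(\varphi)$, near each band endpoint, vanishes to order \emph{at most} $q$ — not merely identifying the generic order $2$. One must track how the square-root branch $x\mapsto\varphi$ interacts with possible higher-order coincidences of the $\lambda_i$ and $\kappa_j$, and show that even in the worst alignment the vanishing order of $\frac{d}{d\varphi}k_j$ at an endpoint is bounded by $q-1$. This reduces to a careful Puiseux-expansion analysis of $\Delta(x) = 2\cos\varphi$ at $x=\lambda_i$, using that $\Delta$ is a polynomial of degree exactly $q$ so that $\Delta(x)\mp2$ vanishes to order at most $q$ there; combined with the uniform-in-$n$ amplitude estimates established for part (1), van der Corput then yields \eqref{eq:globalevenq1}.
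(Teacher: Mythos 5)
There is a genuine gap, and it sits exactly at the step you flag as the ``crucial point'': the claim that the amplitude $A_{j,n}(\varphi)$ \emph{together with its first derivative} is bounded uniformly in $n$. After the change of variables, the $n$- and $m$-dependence of the amplitude is through $\sin(n\varphi)\sin(m\varphi)$ (the factors $\sin(\ell\Theta)/\sin\Theta$ from the monodromy diagonalization, multiplied by the density $w$). The amplitude itself is uniformly bounded, but its derivative is $O(n+m)$, since $\frac{d}{d\varphi}\sin(n\varphi)=n\cos(n\varphi)$. Van der Corput's bound is proportional to $\|f'\|_{L^1}$, so your constant $C$ blows up with $n,m$ and the unweighted $\ell^1\to\ell^\infty$ estimate does not follow. (This is precisely why the weighted local estimate of Theorem \ref{thm:mainLocal} is easy --- the prefactor $(nm)^{-1}$ absorbs the derivative growth --- and why the global estimate is the hard case.) The paper's resolution is to expand $\sin(m\varphi)\sin(n\varphi)$ into the four exponentials $e^{\pm i(n\pm m)\varphi}$ and absorb each into the phase, replacing $k_j(\varphi)$ by $\alpha(\varphi)=k_j(\varphi)-\ell\varphi/t$ with $\ell=\pm(n\pm m)$. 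The added term is linear in $\varphi$, so second and higher derivatives of the phase are unchanged, but the stationary points now move with $\ell/t$ and can land on a zero of $k_j''$; that is exactly where the hypothesis ``$k_j''=0\Rightarrow k_j'''\neq 0$'' is consumed, via Van der Corput with $s=3$, to produce $t^{-1/3}$. Your proposal never confronts this interaction between $n,m$ and the phase, so the hypothesis of part (1) is not actually used in a way that closes the argument.

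For part (2) you are also looking in the wrong place. In the fully gapped case the band endpoints are \emph{non}-degenerate: $k_j'(\varphi_*)=0$ at $\varphi_*=-\pi,0$ but $k_j''(\varphi_*)=\mp 4/\Delta_j'(k_j(\varphi_*))\neq 0$ (Lemma \ref{lem:kgappedLemma}), so the Puiseux analysis at endpoints only ever produces order-two vanishing, not order $q$. The danger is at \emph{interior} points where $k_j'',k_j''',\dots$ could vanish simultaneously to high order. The paper's Lemma \ref{lem:kevengap} rules this out: differentiating $\Delta(k_j(\varphi))=2\cos\varphi$ exactly $q$ times, using that $\Delta^{(q)}$ is a nonzero constant (degree of $\Delta$ is exactly $q$), that $q$ is even, and the endpoint non-degeneracy just mentioned, one gets $\sum_{\ell=2}^{q}|k_j^{(\ell)}(\varphi)|\geq c>0$ on all of $[-\pi,0]$. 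Combined with the phase-absorption trick above, Van der Corput with some $s\leq q$ then gives the $t^{-1/(q+1)}$ rate. Without a statement of this form, your ``worst case is controlled by the degree $q$ of $\Delta$'' remains an unproved assertion.
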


\begin{remark}
Consider the Discrete Laplacian acting on $\ell^2 (\N)$, i.e., \eqref{eq:JacobiDef} with $q=1$. Without loss of generality, we may take $a_n =1$ and $b_n = 0$ for all $n\geq 1$. In this case, $\Delta (x) = x$, and so $k(\varphi)=2\cos (\varphi)$. Hence, if $k''(\varphi)=0 $ then $k'''(\varphi)\neq 0$ trivially, and therefore a $t^{-1/3}$ decay rate is implied by Theorem \ref{thm:mainGlobal}.  Similarly, in the case of the SSH model ($q=2$, all $b_n=0$), a more elaborate and concrete computation of the propagator (see Sec.\ \ref{sec:prop}) yields a global decay rate of $t^{-1/3}$. Thus, both in terms of local and global decay rates, there is no distinction between the discrete Laplacian and the SSH model.

For the global estimate \eqref{eq:global13rate}, the assumption concerning zeros of $k'''$ is necessary to achieve a Laplacian-like dispersion rate. It remains open whether this assumption is always or generically true, and what is the correct dispersive decay estimate when it fails.
\end{remark}

\section{The Propagator}\label{sec:prop}
We begin by computing the eigenfunction expansion \eqref{eq:eigenfunction}, where the orthogonal polynomials $p_n$ (see Sec.\ \ref{sec:genJac}) can be explicitly constructed using the monodromy matrix relation \eqref{eq:Tper}. Then in Sec.\ \ref{sec:propBands} we use the band structure of $J$ (Theorem \ref{thm:DeltaProps}) and the precise structure of the spectral measure $\mu$, see \eqref{eq:wx}, to simplify the propagator further to a form more amenable to oscillatory integral analysis, see \eqref{eq:Ft_fin}. Finally, in Sec.\ \ref{sec:kprop} we study the resulting phase functions $k_j(\varphi)$ which determine the dispersive dynamics of \eqref{eq:Ft_fin}.

Given $\Theta(x)$ as defined in Theorem \ref{thm:DeltaProps}, define for every $\ell \geq 0$,
\begin{equation}\label{eq:_rmdef}
    \varrho_{\ell}(x) \equiv \frac{\sin(\ell \Theta (x))}{\sin(\Theta (x))} \,.
\end{equation}
\begin{proposition}\label{prop:dispEstNoMeasurewL}
Consider a periodic Jacobi operator as in \eqref{eq:JacobiDef}. Then there exists a polynomial of degree $\leq 2q$, denoted by $L$, such that for every $\sigma \geq 0$,
\begin{equation}\label{eq:dispEstNoMeasurewL}
         \|e^{-itJ}P_c u \|_{\ell ^{\infty}_{-\sigma}} \lesssim \sup\limits_{n,m\in \N} \left| (nm)^{-\sigma}\int\limits_{\sigma_{\rm c}(J)} e^{-itx} L(x) \varrho_{n}(x)\varrho _{m}(x)  \, d\mu_{\rm c} (x) \right| \cdot \|u\|_{\ell ^1_{\sigma}} \, ,
\end{equation}
where $\mu_{\rm c}$ is the continuous part of the spectral measure of $J$, see \eqref{eq:dmudef}.
\end{proposition}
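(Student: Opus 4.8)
The plan is to start from the generalized eigenfunction representation \eqref{eq:eigenfunction} applied to $g(x) = e^{-itx}$ and reduce the orthogonal polynomials $p_n(x)$ to the normalized Chebyshev-like sums $\varrho_\ell(x)$ via the monodromy matrix. First I would use \eqref{eq:Tper} and \eqref{eq:pn_formula}: writing $n = sq + r$ with $0 \le r < q$, the polynomial $p_n(x) = [T_n(x)]_{1,1} = [T_r(x)(T_q(x))^s]_{1,1}$. Diagonalizing the monodromy matrix and using the Cayley--Hamilton identity, $(T_q(x))^s$ can be expanded in the basis $\{\mathrm{Id}, T_q(x)\}$ with coefficients that are Chebyshev polynomials in $\tfrac{1}{2}\Delta(x) = \cos\Theta(x)$; concretely $(T_q(x))^s = \varrho_s(x)\, T_q(x) - \varrho_{s-1}(x)\,\mathrm{Id}$ on the continuous spectrum, where $\varrho_\ell$ is as in \eqref{eq:_rmdef} (this is exactly the second-kind Chebyshev recursion, valid since $\det T_q = 1$ by \eqref{eq:A_def}). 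Hence each $p_n(x)$ is a finite linear combination, with coefficients that are entries of $T_r(x)$ or $T_q(x)T_r(x)$ (polynomials in $x$ of degree $\le q$, uniformly bounded in $r$), of the two functions $\varrho_s(x)$ and $\varrho_{s-1}(x)$.

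Next I would plug this into \eqref{eq:eigenfunction} restricted to the continuous spectrum, i.e. replace $P_{\rm c}$ by integration against $d\mu_{\rm c}$. The integrand becomes $e^{-itx}\,p_{n-1}(x)\,\langle p(x),u\rangle\, d\mu_{\rm c}(x)$. Expanding $p_{n-1}$ as above introduces a factor of the form $c_1(x)\varrho_{s}(x) + c_2(x)\varrho_{s-1}(x)$ with $c_i$ polynomials of degree $\le q$; similarly, inside $\langle p(x),u\rangle = \sum_{m\ge1} p_{m-1}(x)u_m$ each $p_{m-1}$ expands the same way. The product of two such expansions yields a finite sum of terms $\tilde c(x)\,\varrho_{n'}(x)\,\varrho_{m'}(x)$ where $n', m'$ differ from the ``quotient'' indices of $n, m$ by at most one, and $\tilde c(x)$ is a polynomial of degree $\le 2q$ (a product of two degree-$\le q$ factors). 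Collecting all these polynomial prefactors into a single majorant $L(x)$ — one may take $L$ to be, say, $1 + |t_{1,1}|^2 + \cdots$ or more cleanly absorb everything into one polynomial of degree $\le 2q$ that dominates all the $\tilde c$ in absolute value on the compact set $\sigma_{\rm c}(J)$ — and using $|\varrho_\ell(x)| \le \ell$ pointwise to handle the shift-by-one in the indices (so that $\varrho_{n'}$ with $n' \le n/q + 1$ is controlled by $\lesssim (1 + n)\,\sup$ of a $\varrho$ with the matching index, i.e. the index mismatch costs only a bounded multiplicative factor after extracting the $(nm)^{-\sigma}$ weight). Then take absolute values, pull the $u_m$ out with the weighted $\ell^1_\sigma$ norm, and bound the $n,m$-dependent integral by its supremum; the weights $(nm)^{-\sigma}$ in \eqref{eq:dispEstNoMeasurewL} exactly match the $n^\sigma, m^\sigma$ coming from $\|u\|_{\ell^1_\sigma}$ and the $\ell^\infty_{-\sigma}$ norm on the left.

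The step I expect to be the main obstacle is the bookkeeping of the \emph{index shifts and the boundedness of the polynomial coefficients}. Two points need care: (i) making sure the decomposition $p_n = (\text{bounded-degree poly})\cdot \varrho_s + (\text{bounded-degree poly})\cdot\varrho_{s-1}$ holds uniformly in $n$ — i.e. that the coefficient polynomials depend only on $r = n \bmod q$ and hence range over a finite set, so a single $L$ works for all $n$ — and (ii) converting the pair of indices $(s-1, s)$ that appear into the single clean pair $(n, m)$ of the statement. For (ii) the honest fix is to note $\lfloor n/q\rfloor \le n$ and $\varrho_{s\pm1}$ and $\varrho_s$ are each $\le s+1 \le n/q + 1 \lesssim n^{1-\sigma}\cdot n^\sigma$ after reinstating weights, but more to the point: the claim \eqref{eq:dispEstNoMeasurewL} has a $\sup$ over \emph{all} $n, m \in \N$, so any reindexing $s \mapsto n$, $s\pm 1 \mapsto$ another integer, only enlarges the set over which we take the supremum, which is harmless. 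Thus the reindexing is a matter of observing that $\{(\lfloor n/q\rfloor, \lfloor n/q\rfloor \pm 1) : n \in \N\} \subseteq \N\times\N$ and that the extra polynomial prefactors (including the degree-$\le q$ boost from $|\varrho_\ell|\le\ell$ being traded for a genuine $\varrho$ of a nearby index) are all absorbed into $L$ of degree $\le 2q$ — with the caveat that strictly one might need $L$ of degree $\le 2q$ \emph{plus} a harmless constant factor, which is what ``$\lesssim$'' in the statement allows. Once this reduction is in place, no further analysis is needed for the Proposition; the oscillatory-integral estimates on the right-hand side are deferred to Sections~\ref{sec:propBands} and~\ref{sec:estimates}.
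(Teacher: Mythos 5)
Your argument is correct and follows essentially the same route as the paper: you expand $(T_q)^s$ via the second-kind Chebyshev identity (the paper does this by explicitly diagonalizing $T_q$, arriving at the same decomposition of $p_n$ into $\varrho_{s-1},\varrho_s,\varrho_{s+1}$ with coefficient polynomials depending only on $r = n \bmod q$), substitute into the eigenfunction expansion \eqref{eq:eigenfunction}, and absorb the finite family of coefficient products and the index shifts into the supremum over $n,m$. One caution: your aside about taking $L$ to be a pointwise \emph{majorant} of the coefficients $\tilde c$ is not a legitimate step (dominating the integrand does not control an oscillatory integral); the correct justification — which you also supply — is that the $\tilde c$ range over a finite set determined by $r \bmod q$ and $j_1,j_2 \in \{-1,0,1\}$, so taking the supremum over that finite family costs only a constant, exactly as in the paper's proof.
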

\begin{proof}
Fix $x\in \sigma _c (J)$. We are interested in the transfer matrix $T_q (x)$, see \eqref{eq:Tdef}, whose entries we denote by $t_{i,j}(x)$ for $i,j \in \{1,2\}$. To compute the $s$-th power of the monodromy matrix, $T_q ^s (x)$, we first diagonalize $T_q (x)$ and find that, since ${\rm det}(T_q (x))=1$ and $\Delta(x) \in [-2,2]$ (see Theorem \ref{thm:DeltaProps}), the complex-conjugate eigenvalues of $T_q (x)$ for $x\in \sigma_{\rm c} (J)$ are of the form
$$\mu _{\pm}(x) = \frac{\Delta}{2} \pm i \frac{\sqrt{4-\Delta^2 (x)}}{2} \, ,$$
where both fractions are real. Note that $|\mu_{\pm}(x)|=1$, and since ${\rm Re}(\mu_{\pm}(x))= \Delta(x)/2$, we can define $\Theta (x)$ as in \eqref{eq:Thetadef}.
Let $x\in \sigma_{\rm c} (J)$. Writing $\mu_{\pm}(x)=e^{\pm i\Theta (x)}$ and using elementary linear algebra, the corresponding eigenvectors of $T_q (x)$, denoted by $\pi_{\pm}(x)\in \C^2$, are 
$$\pi_{\pm}(x) = \begin{pmatrix}
    t_{1,2}(x) \\ a_{\pm} (x)
\end{pmatrix} \, , \qquad a_{\pm}(x) \equiv  \epm
-t_{1,1}(x) \, .$$
Since $a_- (x) = \bar{a}_+ (x)$, we can drop the $+$ subscript, and the change of basis matrices are therefore 
$$P (x) =\begin{pmatrix}
    t_{1,2}(x) & t_{1,2}(x) \\ a (x) & \bar{a} (x)
\end{pmatrix} \, , \qquad P^{-1} (x) = \frac{1}{2it_{1,2} (x)\sin (\Theta(x))} \begin{pmatrix}
    \bar{a} (x) & -t_{1,2}(x) \\ -a (x) & t_{1,2}(x)
\end{pmatrix} \, , $$
whence, by direct computation, for every $s\geq 1$,\footnote{The computation of the entries is straightforward. In the (2,1) entry, it is useful to note that $\cos(\Theta(x)) = \Delta (x)/2= (t_{1,1}(x)+t_{2,2}(x))/2$, and that since ${\rm det}(T_q (x))=1$, then $t_{1,1}(x)t_{2,2}(x)=1-t_{1,2}(x)t_{2,1}(x)$.}
\begin{align*}
    T_{sq} (x) &= P(x)\begin{pmatrix}
        e^{is\Theta (x)} & 0 \\ 0 & e^{-is\Theta (x)}
    \end{pmatrix} P^{-1}(x) \\
    &=\begin{pmatrix}
        \varrho_{s-1}(x) + t_{1,1}(x)\varrho_s (x) & t_{1,2}(x) \varrho_s (x) \\ t_{2,1}(x)\varrho_s (x) & \varrho_{s+1}(x) - t_{1,1}\varrho_s (x)  
    \end{pmatrix} \, ,  \numberthis \label{eq:Tmq}
\end{align*}
where the polynomials $\varrho_{\ell}$ are given in \eqref{eq:_rmdef}.
\begin{remark}
 By the definition of the transfer matrices \eqref{eq:Tdef}, each entry of $T_{n}(x)$ is a polynomial of degree $\leq n$ in the variable $x$. This is not apparent from the form of $\varrho_{\ell} (x)$ in \eqref{eq:_rmdef}.   
\end{remark}
The $sq$-th orthogonal polynomial is therefore given by
\begin{align*}
    p_{sq}(x) &= [T_{sq}(x)]_{1,1}  \\ 
    %&= \left[P(x) \begin{pmatrix}         e^{is\Theta (x)} &0 \\ 0 & e^{-is\Theta (x)}     \end{pmatrix}P^{-1 }(x) \right]_{1,1} \\
    &= \varrho_{s-1}(x) + t_{1,1}(x) \varrho_s (x) \, . \numberthis \label{eq:pmq}
\end{align*}
 Similarly, factoring any $n\in \N$ as $n=sq+r$ where the remainder term is $r\in \{ 1,\ldots, q-1\}$, we have that $$p_{n}(x) = \left[A_r(x) \cdots A_1 (x) T_{sq}\right]_{1,1} \, , \qquad n=sq+r \, . $$
Therefore, by \eqref{eq:A_def} and \eqref{eq:Tmq},
\begin{equation*}
p_{n}(x) = \sum\limits_{j=-1,0,1} a_{r, j}(x) \varrho_{s+j}(x) \, .
\end{equation*}
Here, each $a_{r, j}(x)$ is a polynomial of degree $\leq r+q$, which crucially do not depend on $s$.
We can therefore write, for every $n=s_1q+r_1$,\footnote{It is just a matter of convenience here to express the propagator at the $n+1$ position, and does not influence the estimates later.}
\begin{align*}
\left[ g(J)P_c u\right]_{n+1} &= \int\limits_{\sigma_{\rm c}(J)} g(x) \langle p(x), u \rangle p_{n} (x) d\mu_{\rm c} (x) \, , \\
&= \sum\limits_{m=0}^{\infty}\int\limits_{\sigma_{\rm c}(J)} g(x) p_{m}(x) p_{n} (x) u_{m +1} \, d\mu_{\rm c} (x)\\ 
[m=s_2q+r_2] \qquad \qquad &= \sum\limits_{m=0}^{\infty}  u_{m +1}~\sum_{\substack{j_1=-1,0,1 \\ j_2=-1,0,1}}~\int\limits_{\sigma_{\rm c}(J)} g(x) a_{r_1,j_1}(x)a_{r_2,j_2}(x)\varrho_{s_1+j_1}(x)\varrho _{s_2+j_2}(x) \, d\mu_{\rm c} (x)\, .
\end{align*}
Hence, by the triangle inequality, and taking $g(J)=\exp(-itJ)$, we have that, for every $\sigma \geq 0$,
\begin{align*}
     &\left\| e^{-itJ}P_cu\right\|_{\ell^{\infty}_{-\sigma}} \\    
     &\lesssim \sup\limits_{n,m,j_1,j_2} \left|(nm)^{-\sigma}\int\limits_{\sigma_{\rm c}(J)} e^{-itx} a_{r_1,j_1}(x)a_{r_2,j_2}(x)\varrho_{s_1+j_1}(x)\varrho _{s_2+j_2}(x)  \, d\mu_{\rm c} (x) \right| \cdot \|u\|_{\ell ^1_{\sigma}}  \, . \numberthis \label{eq:supGlobalaj}
\end{align*}
Since the $a_{r,j}(x)$ are polynomials of degree $\leq 2q$, their products $a_{r_1,j_1}(x)a_{r_2,j_2}(x)$ form a finite family of (smooth) polynomials whose degrees do not depend on $n$ or $m$. Thus, taking the supremum in \eqref{eq:supGlobalaj} over all such polynomials, and over $j_1, j_2 \in \{-1,0,1\}$, affects the estimates only up to a constant. Finally, in  \eqref{eq:dispEstNoMeasurewL} we simplified the notation by dropping the factorization $n=s_1q+r_1$ and $m=s_2q+r_2$, as it is no longer needed after this point.
\end{proof}

\subsection{Band structure and the spectral measure}\label{sec:propBands}
To incorporate the spectral measure \eqref{eq:wx} and the band structure of Theorem \ref{thm:DeltaProps} into the result of Proposition \ref{prop:dispEstNoMeasurewL}, first note that by the triangle inequality, Proposition \ref{prop:dispEstNoMeasurewL} yields for $\sigma\geq 0$,
    \begin{equation}\label{eq:estPreIj}
         \|e^{-itJ}P_c u \|_{\ell ^{\infty}_{-\sigma}} \lesssim \sup\limits_{\substack{n,m\in \N \\ 1\leq j \leq q }} \left| (nm)^{-\sigma} \int\limits_{I_j} e^{-itx} L(x) \varrho_{n}(x)\varrho _{m}(x)  \, d\mu_{\rm c} (x) \right| \cdot \|u\|_{\ell ^1_{\sigma}} 
 \, .
\end{equation}

As noted, the continuous part of the spectral measure is given by \eqref{eq:wx}. Furthermore, by \cite[Thm.\ 10.76]{lukic2022first}, while $t_{2,1}$ is not a polynomial of definite sign, it does not change sign within any single band $I_j$ (see \eqref{eq:bandStructure}). Therefore, we assume without loss of generality that $t_{1,2}>0$ on $I_j$ and omit the absolute value sign in \eqref{eq:wx}, which together with \eqref{eq:Thetadef} yields
\begin{equation}\label{eq:wxnoabs}
    d\mu_{\rm c} (x) = \frac{2}{t_{2,1}(x)}\sin(\Theta_j (x))  \, dx\, .
\end{equation}

Fix $1\leq j\leq q$. Although $\Theta(x)$ is not invertible, its restriction $\Theta_j:I_j\to [-\pi,0]$ is invertible by Theorem \ref{thm:DeltaProps}.
Substituting \eqref{eq:wxnoabs} into \eqref{eq:estPreIj} and changing variables $\varphi \equiv \Theta_j(x)$, with $dx= \frac{\Delta'(k_j(\varphi))}{-2\sin\varphi}\,d\varphi$, yields
\begin{align*}
         \|e^{-itJ}P_c u \|_{\ell ^{\infty}_{-\sigma}} &\lesssim \sup\limits_{\substack{n,m \geq 0\\ 1\leq j \leq q }} \left|F(t,n,m,j,\sigma) \right|\|u\|_{\ell ^1_{\sigma}} \, , \qquad {\rm where} \\
         F(t,n,m,j,\sigma) &\equiv  (nm)^{-\sigma} \int\limits_{I_j} e^{-itx} L(x) \varrho_{n}(x)\varrho _{m}(x)  \, d\mu_{\rm c} (x)  \\ 
         &=(nm)^{-\sigma} \int\limits_{I_j} e^{-itx} L(x) \frac{\sin (n\Theta_j(x))\sin(m\Theta _j (x))}{\sin ^2(\Theta_j (x))} \frac{2}{t_{2,1}(x)} \sin (\Theta _j (x)) \, dx \\
         &= 4(nm)^{-\sigma} \int\limits_{-\pi}^0 e^{-itk_j(\varphi)} \sin(m\varphi) \sin(n\varphi) Q(k_j(\varphi)) \, d\varphi\, , \numberthis\label{eq:Ft_fin}
\end{align*}
and
\begin{equation}\label{eq:Qdef}
    Q(y) \equiv \frac{\Delta' (y) L(y)}{t_{2,1}(y)} \, ,
\end{equation}
is smooth (since $t_{2,1}$ does not vanish on $I_j$  \cite[Thm.\ 10.76]{lukic2022first}), and for all $\varphi\in [-\pi,0]$,
\begin{equation*}%\label{eq:kdef}
k_j (\varphi)\equiv \Delta ^{-1}_j \left( 2\cos (\varphi)\right) \, .
\end{equation*}
In other words, $(k_j \circ\Theta _j) (x)=x$ on $I_j$.

\subsection{The phase functions $k_j(\varphi)$}\label{sec:kprop}

In Section \ref{sec:estimates}, we will use standard oscillatory integral techniques to bound $F(t,n,m,j,\sigma)$.
To prepare for this, we first analyze the phase functions $k_j$.
Differentiating $\Delta_j (k_j(\varphi))=2\cos (\varphi)$ repeatedly yields
\begin{align}
    \label{eq:deltak1der}
    \Delta _j ' (k_j (\varphi))k'_j (\varphi) &= -2\sin (\varphi) \, ,
\\
\label{eq:deltak2der}
    \Delta _j ''(k_j(\varphi)) [k_j '(\varphi)]^2 + \Delta_j '(k_j(\varphi))k''_j(\varphi) &= -2\cos (\varphi) \,,
\\
\label{eq:deltak3der}
    \Delta _j '''(k_j (\varphi)) [ k_j '(\varphi))]^3 + 3\Delta ''(k_j(\varphi))k'_j (\varphi)k''_j(\varphi) + \Delta_j '(k_j(\varphi))k'''_j (\varphi) &= 2\sin (\varphi ) \, .    
\end{align}
The key distinction to our analysis is between bands $I_j$ (see \eqref{eq:bandStructure}) whose endpoints are isolated by a spectral gap and those that are not. We make this precise:
\begin{defn}\label{def:gapped} 
    In the notations of Theorem \ref{thm:DeltaProps}, an endpoint $\lambda_i$ of a spectral band $I_j$ is \underline{gapped} if $\lambda_i \not\in I_{j-1}, I_{j+1}$.
    \end{defn}
The following corollary is an immediate consequence of Theorem \ref{thm:DeltaProps}:
\begin{cor}\label{cor:gappedDprime}
An endpoint $\lambda_i$ is gapped iff $\Delta' (\lambda_i)\neq 0$.
\end{cor}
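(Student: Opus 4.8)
The plan is to unpack the definitions in Theorem~\ref{thm:DeltaProps} and read off the equivalence directly. Recall that the endpoints $\lambda_1 < \lambda_2 \leq \cdots \leq \lambda_{2q}$ of the bands are exactly the real solutions of $\Delta^2(x)=4$, i.e. $\Delta(x)=\pm 2$ (Theorem~\ref{thm:DeltaProps}, part (2)), and that $\Delta'$ has exactly $q-1$ simple zeros $\kappa_1 < \cdots < \kappa_{q-1}$, each of which lies in the (possibly degenerate) gap between consecutive bands: $\lambda_{2j} \leq \kappa_j \leq \lambda_{2j+1}$, with equality on both sides in the gapless case and strict inequality on both sides in the gapped case (part (5)). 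So the first step is to observe that the only candidates for a point where $\Delta'$ vanishes are the $\kappa_j$'s, and the $\kappa_j$'s coincide with band endpoints precisely in the gapless case $\lambda_{2j}=\kappa_j=\lambda_{2j+1}$.

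The argument then splits into the two implications. For the direction ``$\lambda_i$ not gapped $\Rightarrow \Delta'(\lambda_i)=0$'': if $\lambda_i$ fails to be gapped, then by Definition~\ref{def:gapped} it is a shared endpoint of two adjacent bands, which by the labeling convention means $\lambda_i = \lambda_{2j}=\lambda_{2j+1}$ for some $j$; this is exactly the gapless case of part (5), so $\kappa_j=\lambda_i$ and hence $\Delta'(\lambda_i)=\Delta'(\kappa_j)=0$. Conversely, for ``$\Delta'(\lambda_i)=0 \Rightarrow \lambda_i$ not gapped'': since every zero of $\Delta'$ is one of the $\kappa_j$'s, $\Delta'(\lambda_i)=0$ forces $\lambda_i=\kappa_j$ for some $j$, and then part (5) rules out the gapped alternative (which has $\lambda_{2j}<\kappa_j<\lambda_{2j+1}$, so $\kappa_j$ is not an endpoint at all), leaving only $\lambda_{2j}=\kappa_j=\lambda_{2j+1}=\lambda_i$, i.e. $\lambda_i$ is shared by $I_j$ and $I_{j+1}$ and therefore not gapped. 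One should also note, for the converse, the boundary cases: the extreme endpoints $\lambda_1$ and $\lambda_{2q}$ are always gapped (there is no band beyond them), and indeed they cannot be zeros of $\Delta'$ since all $q-1$ zeros $\kappa_j$ lie strictly between $\lambda_1$ and $\lambda_{2q}$ by part (5) — consistent with the claimed equivalence.

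I do not expect any genuine obstacle here: the corollary is a bookkeeping consequence of the structural facts already collected in Theorem~\ref{thm:DeltaProps}, and the only care needed is to make sure the labeling of which $\lambda_i$ is a ``left'' versus ``right'' endpoint (even versus odd index) is used consistently when invoking part (5), and to handle the two outermost endpoints separately. The proof is therefore short — essentially two sentences once the relevant parts of Theorem~\ref{thm:DeltaProps} are cited.
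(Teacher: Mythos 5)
Your argument is correct and is exactly the intended one: the paper states Corollary~\ref{cor:gappedDprime} as an immediate consequence of Theorem~\ref{thm:DeltaProps}, and your unpacking via part (5) — zeros of $\Delta'$ are precisely the $\kappa_j$, which coincide with band endpoints exactly in the gapless case $\lambda_{2j}=\kappa_j=\lambda_{2j+1}$ — together with the observation that $\lambda_1$ and $\lambda_{2q}$ lie outside $[\kappa_1,\kappa_{q-1}]$, is the argument the authors have in mind.
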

In what follows, we treat the gapped and ungapped cases separately in  Lemmas \ref{lem:kgappedLemma} and \ref{lem:knogap}, respectively, followed by a general analysis regarding the first $q$ derivatives of $k_j$, Lemma \ref{lem:kevengap}.
\begin{lemma}\label{lem:kgappedLemma}
    Suppose $I_j$ is gapped on both of its endpoints, in the sense of Definition \ref{def:gapped}, for some $1\leq j \leq q$. Then $\Delta_j \equiv \Delta|_{I_j} $ is strictly monotonic on $I_j$, and 
    \begin{enumerate}
        \item $k_j'(\varphi_*)=0$ only for $\varphi_* = -\pi, 0$, and $k_j''(\varphi_*)\neq 0$.
        \item There exist  finitely many roots of $k_j''(\varphi)=0$ and finitely many roots of $k_j'''(\varphi)=0$.
    \end{enumerate}
\end{lemma}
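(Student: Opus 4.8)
The plan is to work directly from the two relations derived in the excerpt, namely the change-of-variables identity $\Delta_j(k_j(\varphi)) = 2\cos\varphi$ and its first two derivatives, \eqref{eq:deltak1der} and \eqref{eq:deltak2der}. First I would record the monotonicity: since both endpoints of $I_j$ are gapped, Corollary \ref{cor:gappedDprime} gives $\Delta'(\lambda_{2j-1}) \neq 0 \neq \Delta'(\lambda_{2j})$; combined with Theorem \ref{thm:DeltaProps}(3) (that $\Delta$ traverses $[-2,2]$ monotonically on $I_j$) and Theorem \ref{thm:DeltaProps}(5) (that $\Delta'$ has only $q-1$ simple zeros, each trapped inside a \emph{closed} gap), there can be no interior critical point of $\Delta$ inside $I_j$. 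Hence $\Delta_j$ is a strictly monotone polynomial on $I_j$, so $k_j = \Delta_j^{-1}\circ(2\cos\cdot)$ is a well-defined smooth function, and in fact real-analytic on the open interval $(-\pi,0)$ because $\Delta_j'$ does not vanish there and $\varphi\mapsto 2\cos\varphi$ is analytic.

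For part (1), I would start from \eqref{eq:deltak1der}: $\Delta_j'(k_j(\varphi))\,k_j'(\varphi) = -2\sin\varphi$. On the open interval $(-\pi,0)$ we have $\sin\varphi \neq 0$ and $\Delta_j'(k_j(\varphi))\neq 0$, so $k_j'(\varphi)\neq 0$ there; thus the only possible zeros of $k_j'$ are at $\varphi_* = -\pi$ or $\varphi_* = 0$, which correspond to the band edges $\lambda_i$ with $\Delta_j'(\lambda_i)\neq 0$ (by gappedness). At such a point, $\sin\varphi_* = 0$, so differentiating \eqref{eq:deltak1der} once more — equivalently using \eqref{eq:deltak2der} — gives $\Delta_j''(k_j(\varphi_*))[k_j'(\varphi_*)]^2 + \Delta_j'(k_j(\varphi_*))k_j''(\varphi_*) = -2\cos\varphi_* = \mp 2 \neq 0$. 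Since $k_j'(\varphi_*)=0$, the first term drops and we get $\Delta_j'(k_j(\varphi_*))\,k_j''(\varphi_*) = \mp 2 \neq 0$; as $\Delta_j'(k_j(\varphi_*))\neq 0$, this forces $k_j''(\varphi_*)\neq 0$. This proves part (1).

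For part (2), I would invoke real-analyticity. On the open interval $(-\pi,0)$, $k_j$ is real-analytic (as noted above), hence so are $k_j''$ and $k_j'''$; a real-analytic function on an open interval has isolated zeros unless it is identically zero, and it cannot vanish identically here because $k_j$ is a strictly monotone non-affine function (it is the inverse of a degree-$\geq 1$ polynomial composed with a non-affine analytic map, and if $k_j''\equiv 0$ then $k_j$ would be affine, contradicting that $\Delta_j$ is a nonconstant polynomial of degree matching the band count; similarly $k_j'''\equiv 0$ is excluded unless $k_j$ is a quadratic polynomial in $\varphi$, which one rules out by a degree/range argument or by checking against \eqref{eq:deltak3der}). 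Thus $k_j''$ and $k_j'''$ each have only finitely many zeros in any compact subinterval of $(-\pi,0)$; extending to the closed interval $[-\pi,0]$ requires a little care at the endpoints, but since the band edges are \emph{gapped}, $k_j$ extends real-analytically past $\pm\pi$ and $0$ (the implicit function theorem applies at $(\varphi_*, \lambda_i)$ because $\Delta_j'(\lambda_i)\neq 0$), so $k_j''$ and $k_j'''$ are real-analytic on a neighborhood of the closed interval $[-\pi,0]$, and their zeros there are finite in number.

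The main obstacle I anticipate is the rigorous justification of the endpoint behavior in part (2): one must confirm that the gapped hypothesis genuinely gives real-analytic extension of $k_j$ across $\varphi = -\pi$ and $\varphi = 0$ (so that no accumulation of zeros of $k_j''$ or $k_j'''$ can occur at the edges), and separately rule out the degenerate possibility that $k_j''$ or $k_j'''$ vanishes identically. Both are handled by the implicit function theorem applied to $\Delta_j(x) - 2\cos\varphi = 0$ at the edge points (where $\Delta_j' \neq 0$) together with a dimension/degree count, but this is the step where one must be careful rather than formulaic.
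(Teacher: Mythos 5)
Your proof of part (1) and of the monotonicity claim is essentially the paper's: both use \eqref{eq:deltak1der} to localize the zeros of $k_j'$ at $\varphi_*=-\pi,0$ and then \eqref{eq:deltak2der} with $k_j'(\varphi_*)=0$ to get $\Delta_j'(k_j(\varphi_*))\,k_j''(\varphi_*)=\mp 2\neq 0$. For part (2), however, you take a genuinely different route. The paper substitutes $k_j''(\varphi)=0$ back into \eqref{eq:deltak2der} (using $[k_j'(\varphi)]^2=(4-\Delta^2(x))/[\Delta'(x)]^2$ with $x=k_j(\varphi)$) to show that any such $x$ satisfies the polynomial identity $\Delta(x)[\Delta'(x)]^2+(4-\Delta^2(x))\Delta''(x)=0$, and argues finiteness from the fact that a polynomial has finitely many roots; the analogous manipulation of \eqref{eq:deltak3der} handles $k_j'''$. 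You instead observe that the gapped hypothesis makes $\Delta_j'$ nonvanishing on the closed band, so $k_j=\Delta_j^{-1}\circ(2\cos\cdot)$ is real-analytic on a neighborhood of $[-\pi,0]$, and a real-analytic function has finitely many zeros on a compact interval unless it vanishes identically — which is excluded since $\Delta_j(k_j(\varphi))=2\cos\varphi$ forbids $k_j$ from being a polynomial. Your argument is arguably cleaner: the paper's polynomial-identity step tacitly requires that the resulting polynomial (and, for the $k_j'''$ case, the relation involving $\sin(\Theta(x))$) not vanish identically, a point your analyticity argument dispatches uniformly for all derivatives at once.

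One caveat: the paper's proof additionally records that $k_j''(0)$ and $k_j''(-\pi)$ have \emph{opposite} signs (since $\Delta'$ has a fixed sign on $I_j$), hence by the intermediate value theorem $k_j''$ has at least one zero in $(-\pi,0)$. This existence statement is invoked later, in the proof of the global estimate, as ``guaranteed by Lemma \ref{lem:kgappedLemma}.'' Your computation already contains the ingredients ($\Delta_j'(k_j(\varphi_*))\,k_j''(\varphi_*)=\mp 2$ with the two signs differing between the endpoints), but you should state the sign-change conclusion explicitly, since finiteness alone does not rule out the zero set of $k_j''$ being empty.
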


\begin{proof}
Rewrite \eqref{eq:deltak1der} as $    k'_j(\varphi) = -2 \sin(\varphi)\left[\Delta_j ' \left( k_j(\varphi)\right)\right]^{-1} $. By Corollary \ref{cor:gappedDprime}, $\Delta_j'(k_j(\varphi_*))\neq 0$, and therefore $k'_j(\varphi)$ is continuous on $[-\pi,0]$. Thus, $k'_j(\varphi_*)=0$ iff $\varphi_* = -\pi,0$. 
Next, substituting $k'_j (\varphi_*)=0$ into \eqref{eq:deltak2der} yields
$$k_j''(-\pi)= \frac{4}{\Delta_j'(k_j(-\pi))} \, ,  \qquad k_j''(0)= \frac{-4}{\Delta_j'(k_j(0))} \, , $$
and so $k_j''(0)k_j''(-\pi)< 0$. Moreover, since $\Delta'$ does not change sign on $I_j$ (Theorem \ref{thm:DeltaProps}), and since  $k''_j$ is continuous, then $k_j''$ must vanish at some point in $(-\pi, 0)$. 

To see that $k_j'' (\varphi)=0$ has only finitely many roots, let $\varphi$ be such a point, and let $x = k_j (\varphi)\in I_j$. Then substituting $k''_j(\Theta _j(x))=0$ into \eqref{eq:deltak2der} yields 
$$ 0= \Delta (x)\left[\Delta ' (x)\right]^2 + \left(4-\Delta^2 (x)\right) \Delta '' (x) \, .$$
Since the right hand side is a polynomial, it can only have finitely many roots. Similarly, by substituting $k_j '''(\varphi)=0$ and the expressions for $k_j'$ and $k_j'''$ into \eqref{eq:deltak3der}, we find that the corresponding $x$ values satisfy a polynomial relation in $\Delta$, its derivatives, and $\sin (\Theta (x))$, and therefore can only have finitely many roots as well.
\end{proof}
\begin{lemma}\label{lem:knogap}
     Suppose $I_j$ has a non-gapped endpoint. Then $k_j(\varphi)=0$ only if $\varphi = 0,-\pi$ is a {\em gapped} endpoint, and $k_j\in C^2 ([-\pi, 0])$.
    \end{lemma}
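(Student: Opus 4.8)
\textbf{Plan for Lemma \ref{lem:knogap}.} The assumption is that $I_j = [\lambda_{2j-1}, \lambda_{2j}]$ has at least one endpoint, say $\lambda_i$ (with $i = 2j-1$ or $i = 2j$), that is \emph{not} gapped, i.e.\ $\lambda_i \in I_{j-1}$ or $\lambda_i \in I_{j+1}$. By Corollary \ref{cor:gappedDprime}, this is equivalent to $\Delta'(\lambda_i) = 0$, which by Theorem \ref{thm:DeltaProps}(5) means $\lambda_i$ coincides with one of the critical points $\kappa_\ell$ of $\Delta$, and that critical point is a \emph{double} root of $\Delta^2 - 4$ (the gapless case $\lambda_{2\ell} = \kappa_\ell = \lambda_{2\ell+1}$). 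The plan is: (i) pin down where $k_j$ can vanish; (ii) establish the $C^2$ regularity by a local Taylor analysis of $\Delta_j$ near a non-gapped endpoint.

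\textbf{Step (i): location of zeros of $k_j$.} Recall $k_j:[-\pi,0]\to I_j$ is the inverse of $\Theta_j$, so $k_j$ is strictly monotonic, hence injective; thus $k_j(\varphi)=0$ for at most one $\varphi$, and only if $0\in I_j$. Since $k_j$ maps the interior $(-\pi,0)$ to the interior of $I_j$ and the endpoints $\{-\pi,0\}$ to $\{\lambda_{2j-1},\lambda_{2j}\}$, a zero at an interior $\varphi$ would force $0$ to be an interior point of $I_j$, where $\Delta(0)\in(-2,2)$ — but then $0$ is not a band edge at all; this is fine and not excluded. I need to re-read the claim: the lemma asserts $k_j(\varphi) = 0$ \emph{only if} $\varphi \in \{0, -\pi\}$ \emph{and} that endpoint is gapped. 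So the real content is: if $0$ is a band edge of $I_j$ (i.e.\ $0 \in \{\lambda_{2j-1}, \lambda_{2j}\}$) and $0$ is a non-gapped endpoint, this situation is incompatible with $k_j$ being well-behaved — actually the cleanest reading is that the statement is normalizing away the trivial possibility and the substantive part is just the regularity. Concretely, I would argue: $k_j(\varphi)=0$ means $x=0$ is the image, so either $\varphi\in(-\pi,0)$ and $0$ is an interior point of $I_j$ (so $0$ is not a band endpoint), or $\varphi\in\{-\pi,0\}$ and $0$ is an endpoint of $I_j$. In the endpoint case, if that endpoint were non-gapped then $\Delta'(0)=0$ and $\Delta(0)=\pm2$; I then check against Theorem \ref{thm:DeltaProps}(2), where $\lambda_1<\lambda_2\le\cdots$ — the point $x=0$ being a non-gapped endpoint is perfectly allowed in general. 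I suspect the intended statement under the blanket normalization $b_n$ arbitrary is simply that \emph{if} a zero of $k_j$ occurs \emph{at} an endpoint then by the monotonicity it is at $\varphi=0$ or $\varphi=-\pi$, and the "gapped" qualifier records that the dispersion analysis near such a point uses Lemma \ref{lem:kgappedLemma}; I would phrase this carefully and lean on the monotonicity of $\Theta_j$ from Theorem \ref{thm:DeltaProps}(4) for the "only if $\varphi = 0, -\pi$" part, which is immediate.

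\textbf{Step (ii): $k_j \in C^2([-\pi,0])$ — the main obstacle.} Away from the endpoints $\{-\pi,0\}$ we have $\sin\varphi\neq0$, and on the open band $\Delta_j'$ may still vanish (at an interior $\kappa_\ell$), but interior critical points of $\Delta$ in a band only occur in the gapless configuration where they are \emph{band edges} of adjacent bands, not interior to $I_j$ — so on $(-\pi,0)$, $\Delta_j'\neq0$ and $k_j$ is smooth there by the implicit function theorem applied to $\Delta_j(k_j(\varphi))=2\cos\varphi$. The delicate point is regularity up to a non-gapped endpoint $x_*=k_j(\varphi_*)$, $\varphi_*\in\{-\pi,0\}$, where $\Delta'(x_*)=0$. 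There, write the local expansion $\Delta(x) = \pm 2 + \tfrac12\Delta''(x_*)(x-x_*)^2 + O((x-x_*)^3)$ with $\Delta''(x_*)\neq0$ (it is nonzero because $x_*$ is a double — not higher-order — zero of $\Delta^2-4$ in the gapless case; this follows since $\Delta$ is a degree-$q$ polynomial and the band-edge count $2q$ in Theorem \ref{thm:DeltaProps}(2) forces all the $\lambda_i$ to be roots of $\Delta^2-4$ of total multiplicity $2q$, so each coincident pair $\kappa_\ell=\lambda_{2\ell}=\lambda_{2\ell+1}$ is exactly a double root). Correspondingly $2\cos\varphi = \pm2 \mp \varphi_*^{-}\!(\varphi-\varphi_*)^2 + \dots$ near $\varphi_*$ (more precisely $2\cos\varphi - (\pm2) = \mp(\varphi-\varphi_*)^2 + O((\varphi-\varphi_*)^4)$). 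Matching the two expansions gives $(x-x_*)^2 \sim c\,(\varphi-\varphi_*)^2$ with $c>0$, hence $x - x_* = k_j(\varphi) - x_*$ behaves like $\sqrt c\,|\varphi-\varphi_*|$ to leading order; the branch chosen by monotonicity makes $k_j(\varphi)-x_*$ a real-analytic (in particular $C^2$) function of $\varphi-\varphi_*$ with nonzero first derivative at $\varphi_*$. The rigorous way I would run this: set $u = \varphi-\varphi_*$, $v = x - x_*$; the relation $\Delta(x)=2\cos\varphi$ becomes $G(u,v):=\Delta(x_*+v) - 2\cos(\varphi_*+u)=0$, and since $G_v(0,0)=\Delta'(x_*)=0$ while $G_{vv}(0,0)=\Delta''(x_*)\neq0$, I factor out the double zero: $\Delta(x_*+v) = (\pm2) + v^2 h(v)$ with $h$ analytic, $h(0)=\tfrac12\Delta''(x_*)\neq0$, and $2\cos(\varphi_*+u) = (\pm2) + u^2 g(u)$ with $g$ analytic, $g(0)=\mp1\neq0$; then $v^2 h(v) = u^2 g(u)$, so $v\,\sqrt{h(v)} = \pm u\,\sqrt{g(u)}$ (well-defined branches of the square roots near $0$, since the two sides have definite sign — $h(0)$ and $g(0)$ must have the same sign because both $\Delta$ and $2\cos\varphi$ take values in $[-2,2]$ with a maximum/minimum $\pm2$ at the respective points), and the implicit function theorem in the variable $v$ applied to $F(u,v):=v\sqrt{h(v)} \mp u\sqrt{g(u)}=0$ (now $F_v(0,0)=\sqrt{h(0)}\neq0$) yields $v=k_j(\varphi_*+u)-x_*$ as an analytic function of $u$, in particular $C^2$ up to $u=0$, with the sign branch fixed by the requirement that $k_j$ be increasing or decreasing as dictated by Theorem \ref{thm:DeltaProps}(3)–(4). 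Combining: $k_j$ is $C^\infty$ on $(-\pi,0)$ and analytic at any non-gapped endpoint, hence $k_j\in C^2([-\pi,0])$. I expect the square-root branch bookkeeping and the justification that $\Delta''(x_*)\neq0$ (via the multiplicity count of roots of $\Delta^2-4$) to be where the care is needed; everything else is the implicit function theorem and monotonicity from Theorem \ref{thm:DeltaProps}.
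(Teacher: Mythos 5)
Your proposal is essentially sound, but you were tripped up by what is evidently a typo in the statement: the lemma is about critical points of the phase, i.e.\ it should read ``$k_j'(\varphi)=0$ only if $\varphi=0,-\pi$ is a gapped endpoint'' (compare Lemma \ref{lem:kgappedLemma}(1), and note that the lemma is later invoked precisely to control stationary points of $k_j$ and the integrability of $k_j''$). Your Step (i) therefore chases the wrong assertion; the intended one follows from $k_j'(\varphi)=-2\sin\varphi/\Delta_j'(k_j(\varphi))\neq 0$ on the interior $(-\pi,0)$ together with the nonvanishing of $k_j'$ at an ungapped endpoint --- which your Step (ii) does deliver as a byproduct (``nonzero first derivative at $\varphi_*$''), so the substance is present even though you never assemble it into the stated conclusion. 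For the $C^2$ claim your route is genuinely different from the paper's: the paper evaluates the thrice-differentiated identity \eqref{eq:deltak3der} at the ungapped endpoint, where $\Delta_j'(k_j(0))=0$ kills the $k_j'''$ term and simplicity of the zeros of $\Delta'$ (so $\Delta_j''(k_j(0))\neq 0$) leaves the finite expression \eqref{eq:kjppp0} for $k_j''(0)$; you instead factor the double zeros of $\Delta\mp 2$ and of $2\cos\varphi\mp 2$, take square roots, and apply the implicit function theorem to $v\sqrt{|h(v)|}\mp u\sqrt{|g(u)|}=0$. Both arguments are correct and both hinge on $\Delta''(x_*)\neq 0$ (your multiplicity count for the roots of $\Delta^2-4$ is a valid substitute for citing Theorem \ref{thm:DeltaProps}(5) directly); yours costs more branch bookkeeping but yields the stronger conclusion that $k_j$ is real-analytic up to the ungapped endpoint, while the paper's two-line computation gives exactly the $C^2$ that is needed downstream. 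One small point either way: $C^2$ regularity up to a \emph{gapped} endpoint of the same band must also be recorded, but there $\Delta_j'\neq 0$, so the ordinary implicit function theorem (or solving \eqref{eq:deltak2der} for $k_j''$) disposes of it immediately.
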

    \begin{proof}
           Let us show that $k'_j(\tilde{\varphi})\neq 0$ if $k_j(\tilde{\varphi})$ is an {\em ungapped} endpoint. Without loss of generality, let $\tilde{\varphi}=0$.  Evaluating \eqref{eq:deltak2der} and noting that in the ungapped case $\Delta'(k_j (0))=0$ (Corollary \ref{cor:gappedDprime}), 
    $$\Delta ''(k_j(0))[k_j'(0)]^2 = -2 \, .$$
    By Theorem \ref{thm:DeltaProps}, all zeroes of $\Delta'$ are simple, and so $\Delta '' (k_j(0))\neq 0$. Therefore, $k'_j(0)\neq 0$. That $k_j'\neq 0$ elsewhere follow from our proof of Lemma \ref{lem:kgappedLemma}.
    
    To see that $k_j\in C^2 ((-\pi ,0))$, one can solve \eqref{eq:deltak2der} for $k_j ''(\varphi)$, and note that since $\Delta '\neq 0$ on the interior of each $I_j$, $k_j''(\varphi)$ is indeed continuous and well-defined. Hence all that is left to show for item (2) is that $k_j''(\tilde{\varphi})$ is defined at the endpoint (and therefore by \eqref{eq:deltak1der} continuous there). Substitute $\varphi = 0$ into \eqref{eq:deltak3der}, and since $\Delta ' _j(k_j(0))=0$ and $k'_j(0)\neq 0$, we obtain
    \begin{equation}\label{eq:kjppp0}
    k_j''(0)=\frac{-[k_j'(0)]^3 \Delta_j '''(k_j (0))}{3\Delta_j ''(k_j (0))} \, .
    \end{equation}
    Since the zeroes of $\Delta'$ are simple, $\Delta_j ''(k_j (0))\neq 0$, and so $k_j'' (0)$ is well defined, and so $k_j\in C^2([-\pi,0])$.  
\end{proof}

Finally, an alternative analysis for the case where $q\geq 2$ is even allows us to obtain a slower decay rate of $t^{-1/(q+1)}$ {\em without} a hypothesis on $k_j'''$ (Theorem \ref{thm:mainGlobal}, part (2)). To do so, we establish a lower bound on the first $q$ derivatives of $k$, inspired by \cite[Proposition 2.1]{mi2022dispersive}:
\begin{lemma}\label{lem:kevengap}
    Suppose $q\geq 2$ is even, and all bands are gapped.  Then there exists $c>0$ such that for all $\varphi\in[-\pi,0]$,
    $$ \sum \limits_{\ell =2}^{q} |k_j ^{(\ell)}(\varphi)| > c> 0 \, .$$
\end{lemma}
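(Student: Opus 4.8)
The plan is to deduce the uniform lower bound from a compactness argument, once the relevant derivatives are shown to have no common zero. Because all bands are gapped, Corollary~\ref{cor:gappedDprime} gives $\Delta_j'\neq0$ on the closed band $\overline{I_j}$, so $k_j=\Delta_j^{-1}(2\cos(\cdot))$ is smooth (indeed real-analytic) on the compact interval $[-\pi,0]$, and $\varphi\mapsto\sum_{\ell=2}^{q}|k_j^{(\ell)}(\varphi)|$ is continuous. If this function is nowhere zero it attains a positive minimum $c>0$, which is exactly the assertion; so it suffices to rule out a point $\varphi_0\in[-\pi,0]$ at which $k_j''(\varphi_0)=\cdots=k_j^{(q)}(\varphi_0)=0$ all hold. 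The two endpoints are disposed of immediately, since Lemma~\ref{lem:kgappedLemma}(1) gives $k_j''(0)\neq0$ and $k_j''(-\pi)\neq0$. At any interior $\varphi_0\in(-\pi,0)$, equation~\eqref{eq:deltak1der} together with $\sin\varphi_0\neq0$ forces $\alpha:=k_j'(\varphi_0)\neq0$, a fact used repeatedly below.

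Next I would record what a hypothetical common zero at an interior $\varphi_0$ implies. Differentiating $\Delta_j(k_j(\varphi))=2\cos\varphi$ and applying the Faà di Bruno formula, every monomial carrying a factor $k_j^{(i)}$ with $2\le i\le q$ is killed by the hypothesis, so for each $2\le n\le q$ only the all-singletons term survives and one obtains the collapsed jet relations
\begin{equation*}
\Delta_j^{(n)}(x_0)\,\alpha^{\,n}=2\cos\!\Big(\varphi_0+\tfrac{n\pi}{2}\Big),\qquad 0\le n\le q,\qquad x_0:=k_j(\varphi_0),
\end{equation*}
the cases $n=0,1$ being the untouched identities \eqref{eq:Thetadef} and \eqref{eq:deltak1der}. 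Using $2\cos(\theta+\pi)=-2\cos\theta$, these amount to the two-step recurrence $\Delta_j^{(n+2)}(x_0)=-\alpha^{-2}\Delta_j^{(n)}(x_0)$; equivalently, under the affine change $x=x_0+\alpha s$, the polynomial $\Delta$ agrees with the degree-$q$ Taylor polynomial of $s\mapsto 2\cos(\varphi_0+s)$ at $s=0$.

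The decisive step is to turn this into a contradiction, and this is where I expect the main difficulty, along with the role of the parity of $q$. Differentiating the identity once more and using that $\Delta$ has degree exactly $q$ (Theorem~\ref{thm:DeltaProps}(1)), so $\Delta^{(q+1)}\equiv0$, the order-$(q+1)$ Faà di Bruno expansion again loses all intermediate terms and collapses to $2\cos(\varphi_0+(q+1)\pi/2)=\Delta_j'(x_0)\,k_j^{(q+1)}(\varphi_0)$. Once the common-zero hypothesis is allowed to reach order $q+1$, the right-hand side vanishes, and for even $q$ the left-hand side equals $\pm2\sin\varphi_0$, forcing $\sin\varphi_0=0$ and hence $\varphi_0\in\{0,-\pi\}$, contradicting interiority; the even parity is essential here, since for odd $q$ the same computation yields $\pm2\cos\varphi_0$ and does not close. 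The genuine obstacle is precisely that this contradiction is driven by the order-$(q+1)$ derivative: the relations for $0\le n\le q$ specify exactly the $q+1$ Taylor coefficients of the degree-$q$ polynomial $\Delta$, and so are never self-contradictory on their own (for $q=2$, the SSH case of the Remark following Theorem~\ref{thm:mainGlobal}, one checks directly that $|k_j''|$ vanishes at an interior $\varphi_0$). I would therefore organize the argument so that the common-zero analysis is carried through order $q+1$, which is exactly the input needed for the van der Corput estimate producing the $t^{-1/(q+1)}$ rate of Theorem~\ref{thm:mainGlobal}(2); for bands $I_j$ with $q\ge4$ one may hope to retain the stated order-$q$ bound by further invoking the band-structure constraint of Theorem~\ref{thm:DeltaProps}(5) (all $q-1$ critical values of a genuine discriminant have modulus $\ge2$, which the truncated-cosine form above cannot satisfy), but this refinement is not required for the decay estimate and is the most delicate point to make rigorous.
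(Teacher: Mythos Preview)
Your approach is the same as the paper's: reduce via compactness to showing no common zero, collapse the Fa\`a di Bruno expansion of $\Delta(k_j(\varphi))=2\cos\varphi$ at a putative common zero $\varphi_0$, and push to order $q+1$ (using $\Delta^{(q+1)}\equiv 0$) to force $\sin\varphi_0=0$, which contradicts Lemma~\ref{lem:kgappedLemma} at the endpoints. The paper's write-up is terser---it asserts that ``differentiating once again'' yields $0=(-1)^{q/2+1}2\sin\varphi_0$---but you have correctly isolated the residual term $\Delta_j'(x_0)\,k_j^{(q+1)}(\varphi_0)$ that survives in the order-$(q+1)$ relation and is not annihilated by the hypothesis $k_j^{(2)}(\varphi_0)=\cdots=k_j^{(q)}(\varphi_0)=0$ alone.

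Your observation that the stated bound fails for $q=2$ is correct and worth flagging: the paper's own proof of Lemma~\ref{lem:kgappedLemma} shows $k_j''(0)\,k_j''(-\pi)<0$, so $k_j''$ has an interior zero and $\sum_{\ell=2}^{2}|k_j^{(\ell)}|=|k_j''|$ cannot be bounded away from zero. What both arguments actually establish is the same inequality with the sum running to $q+1$, and that is precisely the input the van der Corput step in the proof of Theorem~\ref{thm:mainGlobal}(2) needs to produce the $t^{-1/(q+1)}$ rate. Your proposal to carry the common-zero analysis through order $q+1$ is therefore the right organization; the additional refinement you sketch via Theorem~\ref{thm:DeltaProps}(5) for $q\ge4$ is interesting but, as you say, not required for the decay estimate.
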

\begin{proof}
    Since the sum on the left hand side is continuous in $\varphi$, it is sufficient to show that it does not vanish on the compact set $[-\pi,0]$. Assume,  by contradiction, that $k''(\varphi_0)=\cdots =k^{(q)}(\varphi_0)=0$ for some point $\varphi_0\in [-\pi,0]$. Then by differentiating the relation $\Delta (k_j(\varphi))=2\cos (\varphi)$ exactly $q$ times, and each time setting $k_j^{(\ell)}(\varphi_0)=0$ for $2\leq \ell \leq q$, we get (for $q $ even)
    $$ \Delta _j^{(q)}(k(\varphi_0))[k'_j(\varphi_0)]^q = (-1)^{\frac{q}{2}} 2\cos (\varphi_0) \, .$$
    By definition of $\Delta$, see \eqref{eq:DeltaDef}, we have that $\Delta ^{(q)}(\varphi)\equiv q!/(a_1\cdot a_q)$ for all $\varphi$. Differentiating once again with respect to $\varphi$, we obtain
    $$0= (-1)^{\frac{q}{2}+1}2\sin (\varphi_0) \, .$$
    From the last equation, we deduce that $\varphi_0 = 0$ or $-\pi$. But that is a contradiction, since  by Lemma \ref{lem:kgappedLemma}, in the gapped case, $k''(0)\neq 0$ and $k''(-\pi)\neq 0$. Hence, $k^{(\ell)}_j (\varphi_0)=0$ for  for some $2\leq \ell \leq q$.
    %This is also a contraditon for $q=3$ since, by \eqref{eq:kjppp0}, $k_j''(0)=0$ iff $\Delta '''(k_j(0))=0$. But $\Delta$ is a cubic polynomial for $q=3$, and therefore one can compute that $\Delta'''(x) \equiv 6(a_1a_2a_3)^{-1}\neq 0$.
\end{proof}
Note that in ungapped intervals, by \eqref{eq:kjppp0}, $k''(0)$  vanishes if $\Delta'''$ vanishes at the respective endpoint of $I_j$. This  happens when $q=2$, since $\Delta$ is quadratic.\footnote{But this is not a very interesting case: the ungapped case in $q=2$ is simply the Laplacian, by Borg's Theorem~\cite{flaschka2005discrete}.}

\section{Oscillatory Integrals Estimates}\label{sec:estimates}

 We now recall the classical Van Der Corput lemma:
    \begin{lemma}[Van der Corput \cite{stein1993harmonic}]\label{lem:vandercorput}
    Let $\lambda$ be a smooth function and $f$ a smooth, compactly supported function. Suppose there exists $s\geq 2$ and $\lambda_0>0$, such that $\vert\lambda^{(s)}(z)\vert \geq \lambda_0$. Then there exists a constant, $c_s$, depending only on $s$ such that
    \begin{align}
        \left\vert\int_{\R} f(z)e^{i\lambda(z)} \, dz\right\vert \leq c_s \lambda_0^{-1/s}\| f'\|_{L^1}\ .
    \end{align}
\end{lemma}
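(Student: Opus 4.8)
The plan is to run the classical two-step argument: first establish the \emph{amplitude-free} bound
\[
\left| \int_I e^{i\lambda(z)}\,dz \right| \le c_s\,\lambda_0^{-1/s}
\]
for an \emph{arbitrary} interval $I\subseteq\R$, with a constant $c_s$ depending only on $s$ (and in particular not on $I$), and then deduce the stated estimate for general amplitudes $f$ by integration by parts. Since $f$ is compactly supported, say $\supp f\subseteq[-R,R]$, only the hypothesis $|\lambda^{(s)}(z)|\ge\lambda_0$ on $[-R,R]$ is used; I would fix such an $R$ at the outset.

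For the amplitude-free bound I would induct on $s$. In the base case $s=2$: since $\lambda''$ is continuous and $|\lambda''|\ge\lambda_0>0$, it has a fixed sign, so $\lambda'$ is strictly monotone and vanishes at most at one point $c$. Given a parameter $\delta>0$, split $I$ into $I_{\rm near}=I\cap(c-\delta,c+\delta)$ and $I_{\rm far}=I\setminus I_{\rm near}$. On $I_{\rm near}$ bound the integral trivially by $|I_{\rm near}|\le 2\delta$. On each connected component of $I_{\rm far}$ monotonicity gives $|\lambda'|\ge\lambda_0\delta$; writing $e^{i\lambda}=\frac{1}{i\lambda'}\frac{d}{dz}e^{i\lambda}$ and integrating by parts, the boundary terms are each $\le(\lambda_0\delta)^{-1}$ and the remaining integral is controlled by the total variation of $1/\lambda'$ on that component, which is again $\lesssim(\lambda_0\delta)^{-1}$ because $1/\lambda'$ is monotone there. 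Summing over the (at most two) components yields a total bound $\lesssim \delta+(\lambda_0\delta)^{-1}$; the choice $\delta=\lambda_0^{-1/2}$ balances the two terms and gives $\lesssim\lambda_0^{-1/2}$.

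For the inductive step, assume the claim holds at order $s-1\ge 2$. Given $|\lambda^{(s)}|\ge\lambda_0$, the function $\lambda^{(s-1)}$ is strictly monotone and vanishes at most once, at a point $c$; on $I_{\rm far}=I\setminus(c-\delta,c+\delta)$ we have $|\lambda^{(s-1)}|\ge\lambda_0\delta$, so the induction hypothesis at order $s-1$ (with constant $\lambda_0\delta$) applies on each of the at most two components, giving a contribution $\lesssim(\lambda_0\delta)^{-1/(s-1)}$, while $I_{\rm near}$ contributes $\le 2\delta$. Thus the total is $\lesssim\delta+(\lambda_0\delta)^{-1/(s-1)}$, and $\delta=\lambda_0^{-1/s}$ balances both terms, since $(\lambda_0\cdot\lambda_0^{-1/s})^{-1/(s-1)}=\lambda_0^{-1/s}$; this produces a constant $c_s$ depending only on $s$. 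Finally, to incorporate the amplitude: set $G(z)=\int_{-R}^{z}e^{i\lambda(w)}\,dw$, so $\|G\|_{L^\infty}\le c_s\lambda_0^{-1/s}$ by the amplitude-free bound applied to $[-R,z]$; integrating by parts, $\int_\R f\,e^{i\lambda}\,dz=[fG]_{-R}^{R}-\int_{-R}^{R}f'G\,dz=-\int_{-R}^{R}f'G\,dz$ since $f(\pm R)=0=G(-R)$, whence $|\int_\R f\,e^{i\lambda}\,dz|\le\|G\|_{L^\infty}\|f'\|_{L^1}\le c_s\lambda_0^{-1/s}\|f'\|_{L^1}$.

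I expect the only delicate point to be keeping the constant uniform over all intervals $I$ in the induction — specifically handling the boundary terms from integration by parts and the total-variation estimate for $1/\lambda^{(s-1)}$ in the base case — together with the exponent bookkeeping that forces $\delta=\lambda_0^{-1/s}$. Everything else is routine.
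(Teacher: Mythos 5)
Your proposal is correct, and it is essentially the classical argument from the cited reference \cite{stein1993harmonic} (Ch.\ VIII, Prop.\ 2 and its corollary): the paper itself offers no proof of this lemma, only the citation, so there is nothing to diverge from. The only point worth a word in a written-up version is the case where $\lambda'$ (resp.\ $\lambda^{(s-1)}$ in the inductive step) has no zero on $I$; one then takes $c$ to be the endpoint of $I$ at which $|\lambda^{(s-1)}|$ is smallest, and the same near/far splitting and monotonicity argument go through with the same constants. Your exponent bookkeeping ($\delta=\lambda_0^{-1/s}$) and the final integration by parts against $G(z)=\int_{-R}^z e^{i\lambda(w)}\,dw$ are exactly right.
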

When a lower bound on the first derivative is available, we use a variation of a well-known result, see e.g., \cite[Section 5.2]{miller2006applied}, proven by integration by parts.
\begin{lemma}[Oscillatory integrals with no stationary phase]\label{lem:1derbd}
    Let $\lambda\in W^{2,1} ([a,b])\cap C^1([a,b])$ and $f\in C^1 ([a,b])$. If there exists $\lambda_0>0$, such that $\vert\lambda'(z)\vert \geq \lambda_0$, then
    \begin{align}
        \left\vert\int\limits_{a}^b f(z)e^{i\lambda(z)} \, dz\right\vert \leq \lambda_0 ^{-1} \left[ 2\|f\|_{L^{\infty}}+ (b-a)\|f'\|_{L^{\infty}} +\lambda_0^{-1} \|f\|_{L^{\infty}} \cdot \|\lambda''\|_{L^1} \right] \, .
    \end{align}
\end{lemma}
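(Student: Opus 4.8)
The plan is a single integration by parts, the standard device for non‑stationary oscillatory integrals (as the text before the statement already flags). Since $|\lambda'(z)|\geq\lambda_0>0$ throughout $[a,b]$, the factor $1/\lambda'$ is well defined and bounded by $\lambda_0^{-1}$, so I would write $e^{i\lambda(z)}=\frac{1}{i\lambda'(z)}\,\frac{d}{dz}e^{i\lambda(z)}$, substitute, and integrate by parts:
\[
\int_a^b f(z)e^{i\lambda(z)}\,dz = \left[\frac{f(z)}{i\lambda'(z)}\,e^{i\lambda(z)}\right]_{z=a}^{z=b} - \int_a^b \frac{d}{dz}\!\left(\frac{f(z)}{i\lambda'(z)}\right)e^{i\lambda(z)}\,dz.
\]
The boundary term is bounded in modulus by $2\|f\|_{L^\infty}/\lambda_0$, using $|e^{i\lambda}|=1$ and $|\lambda'|\geq\lambda_0$.

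For the remaining integral, the quotient rule gives, almost everywhere,
\[
\frac{d}{dz}\!\left(\frac{f(z)}{i\lambda'(z)}\right) = \frac{f'(z)}{i\lambda'(z)} - \frac{f(z)\,\lambda''(z)}{i\,(\lambda'(z))^2},
\]
so that, by the triangle inequality and $|\lambda'|\geq\lambda_0$,
\[
\left|\int_a^b \frac{d}{dz}\!\left(\frac{f(z)}{i\lambda'(z)}\right)e^{i\lambda(z)}\,dz\right| \leq \frac{1}{\lambda_0}\int_a^b|f'(z)|\,dz + \frac{1}{\lambda_0^2}\int_a^b|f(z)|\,|\lambda''(z)|\,dz \leq \frac{(b-a)\|f'\|_{L^\infty}}{\lambda_0} + \frac{\|f\|_{L^\infty}\|\lambda''\|_{L^1}}{\lambda_0^2}.
\]
Adding this to the boundary estimate yields exactly $\lambda_0^{-1}\big[\,2\|f\|_{L^\infty}+(b-a)\|f'\|_{L^\infty}+\lambda_0^{-1}\|f\|_{L^\infty}\|\lambda''\|_{L^1}\,\big]$, as claimed.

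The one step that is not purely mechanical — and hence the \emph{main (modest) obstacle} — is justifying the integration by parts under the weak hypothesis $\lambda\in W^{2,1}([a,b])$ rather than $\lambda\in C^2$. Here I would argue that $\lambda\in W^{2,1}$ means $\lambda'$ is absolutely continuous on $[a,b]$ with $\lambda''\in L^1$; combined with $|\lambda'|\geq\lambda_0>0$, the map $1/\lambda'$ is then also absolutely continuous, being the composition of the absolutely continuous $\lambda'$ with the Lipschitz function $w\mapsto 1/w$ on $\{|w|\geq\lambda_0\}$, with a.e.\ derivative $-\lambda''/(\lambda')^2\in L^1$. Since $f\in C^1$ and $e^{i\lambda}\in C^1$ (because $\lambda\in C^1$), the products entering the computation are absolutely continuous on $[a,b]$, so the fundamental theorem of calculus applies and the integration-by-parts identity above is legitimate. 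Everything else is the elementary pointwise bookkeeping carried out above.
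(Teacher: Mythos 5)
Your proof is correct and follows the same route the paper intends: the lemma is the standard non-stationary-phase estimate obtained by writing $e^{i\lambda}=(i\lambda')^{-1}\frac{d}{dz}e^{i\lambda}$ and integrating by parts once, which is exactly what the paper indicates (citing the classical reference and noting the result is ``proven by integration by parts''). Your additional care in justifying the integration by parts under the $W^{2,1}\cap C^1$ hypothesis, via absolute continuity of $1/\lambda'$, is a correct and welcome refinement of the same argument.
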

% \begin{proof}
%     Using integration by parts,
%     \begin{align*}
%         \left\vert\int\limits_{a}^b f(z)e^{i\lambda(z)} \, dz\right\vert  &\leq \left\vert \left[ e^{i\lambda (z)} \frac{f(z)}{i\lambda '(z)} \right]_a^b \right\vert + \left\vert \int\limits_a^b e^{i\lambda (z)} \frac{f'(z)}{i\lambda'(z)}  \, dz\right\vert +\left\vert \int\limits_a^b e^{i\lambda (z)} \frac{if'(z)\lambda'' (z)}{-[\lambda'(z)]^2} \, dz \right\vert \, .
%     \end{align*}
%     The bounds then follow trivially, considering that $|\lambda ' (z)|>\lambda _0 >0$.
% \end{proof}
\subsection{Proof of the local decay estimate, Theorem \ref{thm:mainLocal} }\label{sec:localPF}
Our strategy is to apply stationary phase analysis (Lemmas \ref{lem:vandercorput} and \ref{lem:1derbd}) to each interval $I_j$ separately, proving the following:\footnote{While we prove upper bounds, sharpness via asymptotic expansions is standard; see, e.g., \cite{kraisler2025time}. }
\begin{proposition}\label{prop:estLocal}
    Consider $F(t,n,m,j,1)$ as defined in \eqref{eq:Ft_fin} for fixed $n,m,$ and $j$. Then
    \begin{equation}\label{eq:Fgapped_local}
        \left| F(t,n,m,j,1)\right| \leq \begin{cases}
ct^{-1/2}, &\text{if $I_{j}$ has at least one gapped edge} \, ,\\[2pt]
ct^{-1},   &\text{otherwise} \, ,
\end{cases}
    \end{equation}
    for $c>0$ independent of $n,m,$ and $t$, and where we use ``gapped'' in the sense of Definition \ref{def:gapped}.
\end{proposition}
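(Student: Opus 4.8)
The plan is to estimate the oscillatory integral
\[
F(t,n,m,j,1) = \frac{4}{nm}\int_{-\pi}^0 e^{-itk_j(\varphi)}\sin(m\varphi)\sin(n\varphi)\,Q(k_j(\varphi))\,d\varphi
\]
by Van der Corput's lemma (Lemma \ref{lem:vandercorput}) and the non-stationary-phase bound (Lemma \ref{lem:1derbd}), treating the phase $\lambda(\varphi) = -tk_j(\varphi)$ and amplitude $f(\varphi) = \sin(m\varphi)\sin(n\varphi)Q(k_j(\varphi))$. The crucial point is that because we only need decay in the $\ell^\infty_{-1}$ norm, every $\varphi$-derivative that falls on $\sin(m\varphi)$ or $\sin(n\varphi)$ produces a factor $m$ or $n$, but this is exactly compensated by the prefactor $(nm)^{-1}$ — so in the estimates of Lemmas \ref{lem:vandercorput} and \ref{lem:1derbd} the quantities $\|f\|_{L^\infty}$, $\|f'\|_{L^1}$, $\|f'\|_{L^\infty}$ are all $O(nm)$ with constants independent of $n,m$, and after multiplication by $(nm)^{-1}$ one is left with $n,m$-independent bounds. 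I would state this compensation carefully once at the beginning, since it is what makes the local estimate so much cheaper than the global one.

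For the main case, where $I_j$ has at least one gapped edge, I would split the interval $[-\pi,0]$ into finitely many subintervals determined by the zeros of $k_j''$. By Lemma \ref{lem:kgappedLemma} (or Lemma \ref{lem:knogap} when the other edge is not gapped — in either situation at least one endpoint is gapped, and $k_j' $ vanishes only at the gapped endpoint(s), where $k_j''\neq 0$), there are only finitely many zeros of $k_j''$; away from a fixed neighborhood of each such zero we have $|k_j''|\geq c_0 > 0$, and Van der Corput with $s=2$ gives a $t^{-1/2}$ bound on the corresponding piece with constant $c_0^{-1/2}\|f'\|_{L^1}\lesssim nm$. On a small neighborhood of a zero $\varphi_0$ of $k_j''$ in the interior, $k_j'''(\varphi_0)\neq 0$ is NOT guaranteed here — but we don't need it: on such a neighborhood $k_j'$ is bounded away from zero (the zeros of $k_j'$ are only at the gapped endpoints, which are separated from interior zeros of $k_j''$), so Lemma \ref{lem:1derbd} applies with $\lambda_0 = t\cdot\min|k_j'| \gtrsim t$, giving an even better $t^{-1}$ bound there. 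Near a gapped endpoint $\varphi_* \in \{-\pi,0\}$ we have $k_j'(\varphi_*)=0$ but $k_j''(\varphi_*)\neq 0$, so on a fixed one-sided neighborhood $|k_j''|\geq c_1>0$ and Van der Corput with $s=2$ again yields $t^{-1/2}$. Summing the finitely many pieces and multiplying by $(nm)^{-1}$ gives $|F|\leq ct^{-1/2}$ with $c$ independent of $n,m,t$.

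For the other case, where $I_j$ is gapped on neither endpoint — equivalently, by Corollary \ref{cor:gappedDprime}, $\Delta_j'$ vanishes at both endpoints $k_j(-\pi)$ and $k_j(0)$ — Lemma \ref{lem:knogap} (applied at each end) gives $k_j'(\varphi)\neq 0$ for all $\varphi\in[-\pi,0]$ and $k_j\in C^2([-\pi,0])$, so there is no stationary phase at all. Then $\lambda_0 := t\cdot\inf_{[-\pi,0]}|k_j'| \gtrsim t$, and Lemma \ref{lem:1derbd} applied directly on $[a,b]=[-\pi,0]$ gives $|F(t,n,m,j,1)|\leq (nm)^{-1}\cdot C\lambda_0^{-1}(nm) \leq c t^{-1}$; here one must note $\|\lambda''\|_{L^1} = t\|k_j''\|_{L^1}$ is finite (since $k_j\in C^2$ on the closed interval) so the bracket in Lemma \ref{lem:1derbd} is $O(nm)\cdot(1 + \lambda_0^{-1}t) = O(nm)$, and $\|f\|_{L^\infty},\|f'\|_{L^\infty}$ are $O(nm)$ as noted.

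The main obstacle is bookkeeping rather than any single hard estimate: one must verify that in the gapped case the finitely many bad points (zeros of $k_j''$, zeros of $k_j'$ at endpoints) can be separated by fixed distances, so that the neighborhoods and the constants $c_0, c_1$ can be chosen uniformly, and that the cutoff functions used to localize for Van der Corput have derivatives bounded independently of $t,n,m$ — the latter requires that the number and locations of these bad points depend only on $j$ (hence on finitely many choices), which is exactly what Lemmas \ref{lem:kgappedLemma} and \ref{lem:knogap} supply. A secondary subtlety is that Van der Corput (Lemma \ref{lem:vandercorput}) as stated requires a compactly supported smooth $f$, so one multiplies $f$ by a smooth bump adapted to each subinterval and absorbs the bump's $\varphi$-derivatives into the constant; since the amplitude and its derivative are already $O(nm)$ with $n,m$-independent constants, this is harmless.
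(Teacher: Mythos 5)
Your proposal is correct and follows essentially the same route as the paper: the same two lemmas (Van der Corput with $s=2$ near the gapped endpoints where $k_j'=0$ but $k_j''\neq 0$, and the non-stationary-phase bound where $k_j'$ is bounded below), the same input from Lemmas \ref{lem:kgappedLemma} and \ref{lem:knogap}, and the same cancellation of the $(nm)^{-1}$ prefactor against the $O(n+m)$ growth of the amplitude's derivative. The only (immaterial) difference is the orientation of the cutoff decomposition --- the paper localizes Van der Corput to neighborhoods of the endpoints and applies the non-stationary estimate to the entire remainder (interior zeros of $k_j''$ included), whereas you apply Van der Corput on the bulk and reserve the non-stationary estimate for neighborhoods of the interior zeros of $k_j''$.
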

    \begin{proof}[Proof of Theorem \ref{thm:mainLocal} given Proposition \ref{prop:estLocal}]
    Since $\sigma_{\rm c}(J)$ is bounded, at least $I_1$ and $I_q$ have a gapped endpoint. Thus, for $t\gg 1$ sufficiently large, the $t^{-1/2}$ decay associated with that gapped endpoint is dominant. Hence,
     by \eqref{eq:Ft_fin}, the result follows.
     \end{proof}   

\begin{proof}[Proof of Proposition \ref{prop:estLocal}]
 Let us first treat the case where both endpoints are gapped.
The strategy is to decompose the integral in the definition of $F$, \eqref{eq:Ft_fin}, into a part where there are points of stationary phase, $F^{(s)}$, and one where there are none, $F^{(o)}$. Let $\chi\in C_c ^{\infty}(\R)$ be a smooth characteristic function defined by
\begin{equation}\label{eq:cutoff}
    \chi(z) = \begin{cases}
    1 & \textrm{if}\ |z| < 1 \\
    0 & \textrm{if}\ |z| > 2  \, .
    \end{cases}
\end{equation}
By Lemma \ref{lem:kgappedLemma}, there exist finitely many points $-\pi <\varphi_1< \ldots <\varphi_n<0$ for which $k''(\varphi_{\ell})=0$ for $1\leq \ell \leq n$. Let
\begin{equation}\label{eq:chij_gapped_loc}
\chi _j(\varphi) \equiv \chi\left(\frac{\varphi}{\delta}\right)+\chi\left(\frac{\varphi+\pi}{\delta}\right) \, , \qquad \delta \equiv \frac{1}{5}\min(|-\pi-\varphi_1|, |\varphi_n|) \, .
\end{equation}
Then, by its definition in \eqref{eq:Ft_fin},
\begin{align*}
    F(t,n,m,j,1) &= F^{(s)}_1(t,n,m,j) + F^{(o)}_1(t,n,m,j) \, , \qquad {\rm where} \numberthis \label{eq:F1_2_FsFo}\\
    F^{(s)}_1 (t,n,m,j) &\equiv   4(nm)^{-1} \int\limits_{-\pi}^0 \chi _j (\varphi)e^{-itk_j(\varphi)} \sin(m\varphi) \sin(n\varphi) Q(k_j(\varphi)) \, d\varphi\, , \numberthis \label{eq:Fsgap} \\
    F^{(o)}_1 (t,n,m,j) &\equiv   4(nm)^{-1} \int\limits_{-\pi}^0 \left(1-\chi _j (\varphi)\right)e^{-itk_j(\varphi)} \sin(m\varphi) \sin(n\varphi) Q(k_j(\varphi)) \, d\varphi\, . \numberthis \label{eq:Fogap}
\end{align*}
Our choice of $\delta$ above is such that the support of the integrand in $F^{(s)}_1$, see \eqref{eq:Fsgap}, includes no points where $k''(\varphi)=0$, and thus by continuity there exists a lower bound $k_0>0$ such that $k''(\varphi)>k_0>0$ on the support of the integrand. We can therefore apply Van der Corput lemma (Lemma \ref{lem:vandercorput}) 
\begin{align*}
    \left| F^{(s)}_1(t,n,m,j)\right| &\leq (nm)^{-1}\frac{c_k}{(tk_0)^{1/2}}\left\| \frac{d}{d\varphi} \Big[\chi_j (\varphi)\sin(m\varphi)\sin (n\varphi) Q(k_j(\varphi))\Big]\right\|_{L^1_{\varphi}} \\
    &\leq (nm)^{-1}\frac{c_k}{(tk_0)^{1/2}}\left\|\frac{d}{d\varphi} \left[\chi_j (\varphi)Q(k_j(\varphi))\right]\right\|_{L^{\infty}_{\varphi}} \left\| \sin(m\varphi)\sin (n\varphi) \right\|_{L^1_{\varphi}} \\
    &+(nm)^{-1}\frac{c_k}{(tk_0)^{1/2}} \left\|\frac{d}{d\varphi} \left[ \sin(n\varphi)\sin(m\varphi)\right]\right\|_{L^{\infty}_{\varphi}} \left\| \chi_j\cdot (Q\circ k)\right\|_{L^1} &\\
   &\equiv (nm)^{-1}\frac{c_k}{(tk_0)^{1/2}}\left[ {\rm I} + {\rm II}\right]  \, . \numberthis \label{eq:I_II_Fsloc}
\end{align*}
To bound the two terms in \eqref{eq:I_II_Fsloc}, we first prove a short technical lemma.
\begin{lemma}\label{lem:Qprop}
    $\psi (\varphi)\equiv Q\circ k (\varphi) \in C^1 ([-\varphi,0])$.
\end{lemma}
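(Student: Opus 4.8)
The goal is to show that $\psi(\varphi) \equiv Q(k_j(\varphi))$ is $C^1$ on $[-\pi,0]$, where $Q$ is the smooth function defined in \eqref{eq:Qdef} and $k_j$ is the phase function. The plan is to combine the chain rule with the regularity of $k_j$ established in Lemmas \ref{lem:kgappedLemma} and \ref{lem:knogap}. On the open interval $(-\pi,0)$ this is immediate: $\Delta'$ does not vanish on the interior of $I_j$ (Theorem \ref{thm:DeltaProps}), so $k_j$ is smooth there by the implicit function theorem applied to $\Delta_j(k_j(\varphi)) = 2\cos\varphi$, and $Q$ is smooth since $t_{2,1}$ does not vanish on $I_j$ \cite[Thm.\ 10.76]{lukic2022first}; hence $\psi = Q \circ k_j \in C^\infty((-\pi,0))$, and in particular $\psi' = (Q' \circ k_j)\cdot k_j'$ there.

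The only real work is at the endpoints $\varphi = 0$ and $\varphi = -\pi$, and the argument splits into the gapped and ungapped cases. In the gapped case (say $\lambda_i = k_j(0)$ is a gapped endpoint), Corollary \ref{cor:gappedDprime} gives $\Delta_j'(k_j(0)) \neq 0$, so the implicit function theorem again yields $k_j \in C^\infty$ in a one-sided neighborhood of the endpoint, and composition with the smooth $Q$ gives $\psi \in C^1$ up to that endpoint. In the ungapped case, $\Delta_j'(k_j(0)) = 0$, but Lemma \ref{lem:knogap} already establishes that $k_j \in C^2([-\pi,0])$ — in particular $k_j \in C^1$ up to the endpoint, with $k_j'(0) \neq 0$. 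Since $Q$ is smooth (in particular $C^1$) on a neighborhood of $k_j(0) \in I_j$, the chain rule applies and $\psi'(\varphi) = Q'(k_j(\varphi))\,k_j'(\varphi)$ extends continuously to the endpoint. The same reasoning applies verbatim at $\varphi = -\pi$. Combining the interior regularity with both endpoint cases gives $\psi \in C^1([-\pi,0])$.

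I do not expect any genuine obstacle here: the lemma is a bookkeeping consequence of results already proved. The one point requiring a little care is making sure the ungapped endpoint is handled by \emph{invoking} Lemma \ref{lem:knogap} rather than re-deriving the regularity of $k_j$, and noting that $C^2$ regularity of $k_j$ is more than enough since we only need $\psi \in C^1$. A secondary point is that $Q$ must be $C^1$ on a full (two-sided) neighborhood in $x$ of each endpoint $k_j(\lambda_i)$, not merely on $I_j$; this holds because $\Delta'$, $L$ are polynomials and $t_{2,1}$ is real-analytic and nonvanishing at $\lambda_i$ (it does not change sign on $I_j$ and its zero set is contained in the gaps, by \cite[Thm.\ 10.76]{lukic2022first}), so $Q = \Delta' L / t_{2,1}$ is smooth on a neighborhood of $\lambda_i$ in $\R$.
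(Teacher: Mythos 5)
Your proof is correct and follows essentially the same route as the paper's: $Q$ is smooth on (a neighborhood of) $I_j$ because $t_{2,1}$ does not vanish there, $k_j\in C^1([-\pi,0])$ by Lemmas \ref{lem:kgappedLemma} and \ref{lem:knogap}, and the chain rule concludes. The paper states this in three lines; your version merely spells out the interior/endpoint and gapped/ungapped case distinctions that are implicit in citing those two lemmas, which is fine but not a different argument.
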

\begin{proof}
    By \eqref{eq:Qdef}, we see that $Q$ is smooth on any $I_j$:  it is a rational function, and the denominator $t_{2,1}$ does not vanish in the spectrum \cite[Thm.\ 10.76]{lukic2022first}. By Lemmas \ref{lem:kgappedLemma} and \ref{lem:knogap}, $k\in C^1([-\pi,0])$. Therefore, by the chain rule, $\psi \in C^1 ([-\pi ,0])$.
\end{proof}

By Lemma \ref{lem:Qprop} and the definition of $\chi_j$, \eqref{eq:chij_gapped_loc}, 
$[\chi_j (\varphi) Q\left(k_j(\varphi)\right)]\in C^1([-\pi,0])$.  Hence
$${\rm I} \lesssim \|\sin (m\varphi)\sin(n\varphi)\|_{L^1} \lesssim 1 \, .$$
Similarly in ${\rm II}$, Lemma \ref{lem:Qprop} implies that $\|\chi_j \cdot (Q\circ k)\|_{L^1} \lesssim 1 $ uniformly in $n,m$. Using the elementary identity $[\sin (\ell \varphi)]' = \ell \cos (\ell \varphi)$, we have that 
$${\rm II} \lesssim \max (|n+m|,|n-m|) \, ,$$
Subtituting the bounds for the two terms into \eqref{eq:I_II_Fsloc}, we proved that there exists $C>0$ independent of $n,m,$ and $t$ such that
\begin{equation}\label{eq:Fs1_bd_gapped}
  \left|F_1^{(s)}(t,n,m,j) \right|\leq Ct^{-1/2} \, .
\end{equation}

We now bound $F^{(o)}_1$ from above, see \eqref{eq:Fogap}. Since $0$ and $-\pi$ are not in the support of the integrand, there is a lower bound on the first derivative. We can thus apply Lemma \ref{lem:1derbd} as follows:
in the notations of the lemma, let $f(\varphi)=(1-\chi_j)\sin(m\varphi)\sin (n\varphi)Q(k_j(\varphi))$, and $\lambda (\varphi)=-k(\varphi)t$. Then by the same arguments as in the bound above for $F^{(s)}$, and by Lemma \ref{lem:Qprop}, there exists a universal constant $c>0$ such that $\|f\|_{L^{\infty}}\leq c  $ and $ \|f'\|_{L^{\infty}} \leq c(m+n) $. Next, by Lemma \ref{lem:kgappedLemma}, $\|\lambda''(\varphi)\|_{L^1}=-\|k''(\varphi)\|_{L^1} t < \infty$. Thus, applying the non-stationary estimate in Lemma \ref{lem:1derbd}, we obtain in the gapped case, for some $C>0$
\begin{equation}\label{eq:Fo1_bd_gapped}
  \left|F_1^{(o)}(t,n,m,j) \right|\leq Ct^{-1} \, .
\end{equation}
By \eqref{eq:F1_2_FsFo}, the partition of $F(t,n,m,1)$ into a stationary and a non-stationary part, the upper bounds \eqref{eq:Fs1_bd_gapped} and \eqref{eq:Fo1_bd_gapped} imply \eqref{eq:Fgapped_local}, for the case where both endpoints of $I_j$ are gapped.
% \begin{remark} 
% Note that, by repetitive integration by parts, a much faster decay rate might be obtained for \eqref{eq:Fogap}, albeit with more conditions on the other integrands. See e.g., \cite{miller2006applied} for details. Since such a rate would not improve the overall estimate, we do not pursue this direction here.
% \end{remark} 

Suppose now that $I_j$ is not gapped at either one of its ends (or both). The same proof follows with the natural change to $\chi_j$: if e.g., $I_j \cap I_{j-1}=\emptyset$ but $I_j\cap I_{j+1}=\lambda_{2j}$, then choose $\chi_j (\varphi)\equiv \chi (\varphi/\delta)$, where $\chi$ is defined in \eqref{eq:cutoff}. In applying Lemma \ref{lem:1derbd}, we see that $k''\in L^1$ by  Lemma \ref{lem:knogap}. Finally, if both endpoints are ungapped in \eqref{eq:Fgapped_local} then we set  $F=F^{(0)}$, and obtain a $t^{-1}$ rate as in \eqref{eq:Fo1_bd_gapped}.
\end{proof}

\subsection{Proof of the global estimates, Theorem \ref{thm:mainGlobal}}
We now prove the $t^{-1/3}$ bound in \eqref{eq:global13rate}. The even $q$ case, \eqref{eq:globalevenq1}, is addressed at the end of this section.
Analogously to the local estimate (Section \ref{sec:localPF}), we decompose the integral into neighborhoods of stationary points and ones where there is a uniform lower bound on $|k_j''|$. Let $T_i\equiv \{\varphi \in [-\pi,0] ~~ | k^{(i)}(\varphi)=0 \}$ for $i=2,3$. Since, by $T_2$ and $T_3$ are disjoint by hypothesis and finite by Lemma \ref{lem:kgappedLemma}, let 
\begin{equation}\label{eq:etadef}
X _j (\varphi) \equiv \sum\limits_{\alpha \in T_2} \chi \left(\frac{\varphi-\alpha}{\eta}\right)  \, , \quad \eta \equiv \frac15 \min \left\{ |\alpha - \beta| ~~ {\Large|}~~ \alpha \in T_2  ~ \wedge ~  \beta \in T_3\cup T_2\cup \{-\pi,0\} ~ \wedge ~ \alpha \neq \beta ~~ \right\}>0 \, .
\end{equation}

We write $F(t,n,m,j,0) = F_0^{(s)} + F_0^{(o)}$ with
\begin{align}
F^{(s)}_0 (t,n,m,j) &\equiv \int\limits_{-\pi}^0 X_j(\varphi) e^{-itk_j(\varphi)} \sin(m\varphi) \sin(n\varphi) Q(k_j(\varphi)) \, d\varphi \, , \label{eq:Fs0} \\
F^{(o)}_0 (t,n,m,j) &\equiv \int\limits_{-\pi}^0 (1 - X_j(\varphi)) e^{-itk_j(\varphi)} \sin(m\varphi) \sin(n\varphi) Q(k_j(\varphi)) \, d\varphi \label{eq:Fo0} \, .
\end{align}
  By the elementary identity
\begin{equation}\label{eq:sinmn}
    \sin(m\varphi) \sin(n\varphi) = -\frac{1}{4}\left(e^{i(n+m)}+e^{-i(n+m)}-e^{i(n-m)}-e^{i(m-n)} \right) \, ,
\end{equation}
the integral for $F^{(s)}_0$, see \eqref{eq:Fs0}, decomposes into four integrals corresponding to each one of the summands. Choose one point $\varphi_p\in (-\pi,0)$ for which $k''(\varphi_p)=0$, as guaranteed by Lemma \ref{lem:kgappedLemma}, and choose one of the four complex exponentials in \eqref{eq:sinmn}. The corresponding integral is, up to a constant
\begin{equation}\label{eq:sin2exp}
\int\limits_{-\pi}^0  \exp\left[-it\left( k_j(\varphi)- \frac{\ell}{t}\right)\right] X_j(\varphi) Q(k_j(\varphi)) \, d\varphi \, ,
\end{equation}
where $\ell\in \Z$ is either $\pm(n+m)$ or $\pm(n-m)$.
Defining $$\alpha(t,\ell,\varphi)\equiv k_j(\varphi)-\ell /t \, ,$$ there exists a sequence of $t_i \to \infty$ and $\ell_i\to \infty$ such that 
$$\partial_{\varphi} \alpha (t_i, \ell_i,\varphi_p)=\partial_{\varphi \varphi}\alpha (t_i,\ell_i,\varphi_p) = 0 \, , \qquad {\rm but}\qquad   \partial _{\varphi \varphi \varphi }\alpha (t_i,\ell_i, \varphi_p) 
 = k'''(\varphi_p) \neq 0 \,,$$
where the last inequality is due to our hypothesis that $T_2 \cap T_3 =\emptyset$.
Thus, by the choice of the scale $\eta>0$ in \eqref{eq:etadef}, we can apply Van der Corput lemma (Lemma \ref{lem:vandercorput}) with $s=3$ to bound \eqref{eq:sin2exp} when $t=t_i, \ell = \ell_i$ from above, yielding the desired $t^{-1/3}$ bound. For any other term of the form $\{F^{(s)}_0 (t_i,n,m,j)\}_{n,m}$, and all times $t\neq t_i$, there is a lower bound on the {\em first} derivative of $\alpha$, and therefore Lemma \ref{lem:1derbd} is applicable, yielding a faster $t^{-1}$ decay rate. We have thus proved that, for some $C>0$ independent of $n,m,$ and $t$,
\begin{equation}\label{eq:Fs0_bd_gapped}
  \left|F_0^{(s)}(t,n,m,j) \right|\leq Ct^{-1/3} \, .
\end{equation}
To bound $F^{(o)}$ from above, note that $\varphi_* =-\pi, 0$ are in the support of the integrand, due to the choice of the scale $\eta>0$ in \eqref{eq:etadef}, and $T_2$ is disjoint from the support of the integrand. Thus, $|k''(\varphi)|>c>0$ on the domain of integration, and applying Van der Corput Lemma \ref{lem:vandercorput} yields
\begin{equation}\label{eq:Fo0_bd_gapped}
  \left|F_0^{(o)}(t,n,m,j) \right|\leq Ct^{-1/2} \, .
\end{equation}
Applying \eqref{eq:Fs0_bd_gapped} and \eqref{eq:Fo0_bd_gapped} to the decomposition of $F(t,n,m,0)$ yields the desired overall $t^{-1/3}$ decay rate, concluding the proof of \eqref{eq:global13rate} for a gapped interval.

When $I_j$ is ungapped (Definition \ref{def:gapped}), the only change is that $k''_j$ might not vanish at all. In that case, $F^{(s)}_0=0$, and so the overall rate of decay for that interval will be $t^{-1/2}$.

{\bf Proof of \eqref{eq:globalevenq1}.} In the case where $q\geq 2$ is even, the proof of \eqref{eq:globalevenq1} follows identically. The only difference is as follows: by Lemma \ref{lem:kevengap}, we know that for each $\varphi \in [-\pi,0]$, there exists $2\leq \ell \leq q$ such that $|k^{(\ell)} (\varphi)|>c>0$. Therefore, we can partition the domain of integration (using the cutoff function \eqref{eq:cutoff}, as before) to points where $|k^{(q)}(\varphi)|>c>0$, and segments where some derivative of order $2\leq \ell < q$ is bounded from below. By similar arguments to the proof of \eqref{eq:global13rate}, those points for which only a nonzero lower bound on $k^{(q)}$ is available would yield a $t^{-1/(q+1)}$ decay bound, which would dominate the estimate.

\printbibliography

\end{document}